\newtheorem{twierdzenie}{Theorem}[section] 
\newtheorem{wniosek}[twierdzenie]{Corollary}
\newtheorem{propozycja}[twierdzenie]{Proposition}
\theoremstyle{definition}
\newtheorem{definicja}[twierdzenie]{Definition}
\newtheorem{uwaga}[twierdzenie]{Remark}
\title{Generalized Lagrange Theorem}
\author{Karolina Zając}
\address{Karolina Zając - Jagiellonian University, Faculty of Mathematics and Computer Science, Institute of Mathematics, \L ojasiewicza 6, 30-348 Krak\'ow, Poland.}
\email{karolina99.zajac@student.uj.edu.pl}
\date{\today}
\begin{document}

\subjclass[2020]{26A24}

\begin{abstract}

The present paper is devoted to possible generalizations of the classic Lagrange Mean Value Theorem. We consider a real-valued function of several variables that is only assumed to be continuous. The main concept is to replace the notion of the derivative by the so called bisequential tangent cone. We first prove Rolle and Lagrange type results and then we turn to comparing this cone with the Clarke subdifferential in the case of a Lipschitz function. We also investigate an approach using normal cones.
    
\end{abstract}
\keywords{Mean Value Theorem; Tangent cones; Clarke subdifferential.}
\maketitle

\section*{Introduction}

Lagrange's Mean Value Theorem in its classic form, for a differentiable single valued real function, is one of the most crucial facts in mathematical analysis, having a large number of important applications. 

A natural and interesting question is whether the Mean Value Theorem could have a valid counterpart for a function that is only continuous and not necessarily differentiable. In order to solve this problem, one has to find what a replacement for the derivative of the function or the tangent space of its graph.

There are many works concerning the above topic. A very interesting result, employing the generalized gradient, was published by G. Lebourg in \cite{lebourg}. Another approach, via approximate Jacobian matrices, can be found in \cite{j-l}. Some results using Dini derivatives were achieved by T. Ważewski and W. Mlak in \cite{wazewski} and \cite{mlak} respectively. Some possible generalizations can be also found in \cite{d-v}. For vector-valued maps, see also \cite{h-u} and references therein.

In the present paper yet another approach for multi-valued real continuous functions will be presented, using the paratingent cone, named here {\it the bisequential tangent cone (BTC)} (Definition \ref{dwustozek}). The BTC was first introduced by F. Severi and G. Bouligand respectively in the articles \cite{severi} and \cite{bouligand} and then studied in detail by Yuntong W. in \cite{yuntong}. It was also mentioned in several other sources such as \cite{r-w} or \cite{whitney}, with little to no further information provided. In the first section a handful of basic examples and main useful properties of this object will be presented and later a generalized version of the Mean Value Theorem will be formulated, namely a Rolle type result (Theorem \ref{rolle}) and a Lagrange type result (Theorem \ref{lagrange}). 

The second idea is to explore the properties of the normal cone. In this case the geometric interpretation is quite different. Instead of having a tangent parallel to the given secant, one can find in the mean point a nonzero vector from the normal cone perpendicular to the secant, see Theorem \ref{lagrange2}.

Finally, if the considered functions are restricted to be locally Lipschitz, the Clarke subdifferential can be used as the natural counterpart of the derivative. In the third section we recall the theorem by G. Lebourg and later we show a~correspondence between the Clarke subdifferential and the BTC in Theorem \ref{inkluzja}. This results in a very elegant geometric interpretation of Lebourg's result.

I would like to thank my supervisor Maciej Denkowski for suggesting the problem.

\section{Bisequential tangent cones}

Let us recall the basic Mean Value Theorem:

\begin{twierdzenie}[\sc Lagrange]
Let $a,b \in \mathbb{R}, a<b$ and let $f:[a,b] \rightarrow \mathbb{R}$ be a~continuous function, differentiable on an open interval $(a,b)$. Then
$$
\exists c\in(a,b): f'(c) = \frac{f(b)-f(a)}{b-a}.
$$
\end{twierdzenie}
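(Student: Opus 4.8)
The plan is to deduce the statement from a Rolle-type reduction via a suitable auxiliary function, and then to obtain the Rolle-type fact itself from compactness. First I would introduce
$$
\varphi(x) := f(x) - f(a) - \frac{f(b)-f(a)}{b-a}\,(x-a), \qquad x \in [a,b],
$$
which is continuous on $[a,b]$ and differentiable on $(a,b)$, being the difference of $f$ and an affine function, and which satisfies $\varphi(a) = \varphi(b) = 0$. Since $\varphi'(x) = f'(x) - \frac{f(b)-f(a)}{b-a}$ for $x \in (a,b)$, it suffices to produce a point $c \in (a,b)$ with $\varphi'(c) = 0$.

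Next I would invoke the Weierstrass extreme value theorem: being continuous on the compact interval $[a,b]$, the function $\varphi$ attains both its maximum and its minimum. If both of these are attained only at the endpoints, then $\max \varphi = \min \varphi = 0$, so $\varphi \equiv 0$ and any interior point serves as $c$. Otherwise at least one of the two extrema is attained at some interior point $c \in (a,b)$.

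The key local ingredient is then Fermat's theorem at an interior extremum. Treating, say, the case of a maximum at $c$, I would observe that the difference quotient $\bigl(\varphi(c+h) - \varphi(c)\bigr)/h$ is $\le 0$ for small $h > 0$ and $\ge 0$ for small $h < 0$; since $\varphi$ is differentiable at $c$, letting $h \to 0$ from each side forces $\varphi'(c) \le 0$ and $\varphi'(c) \ge 0$ simultaneously, hence $\varphi'(c) = 0$. Substituting back yields $f'(c) = \frac{f(b)-f(a)}{b-a}$, as required.

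The main obstacle is not computational but structural: the whole argument rests on two genuinely analytic facts, namely the compactness of $[a,b]$ (used through the extreme value theorem to locate an interior extremum) and the stability of one-sided sign estimates on difference quotients under the limit defining the derivative. Once these are granted, the proof is immediate, the affine correction term in $\varphi$ being exactly the device that converts the ``secant slope'' of Lagrange's statement into the ``zero slope'' situation of Rolle.
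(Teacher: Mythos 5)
Your proof is correct and complete; it is the standard argument (affine correction to reduce to Rolle, Weierstrass to locate an interior extremum, Fermat's lemma to kill the derivative there). The paper itself only recalls this classical theorem without proof, but your strategy is exactly the one the paper adopts for its generalized versions: Theorem \ref{lagrange} is deduced from Theorem \ref{rolle} by subtracting the linear map $L$, and the proof of Theorem \ref{rolle} likewise starts from the Weierstrass theorem to find an interior extremum, so your route is the natural classical template for the paper's approach.
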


As we would like to obtain a natural counterpart of the above theorem in the case where the function is only continuous, we may replace the derivative of the function by a properly understood tangent to the graph.

The most intuitive idea is to consider the Peano tangent cone:

\begin{definicja}
\label{styczny}
A vector $v\in\mathbb{R}^n$ is called {\it tangent to $S$ at $a$} if there exist sequences $\lbrace s_n \rbrace_{n=1}^{+\infty} \subset S$ and $\lbrace t_n \rbrace_{n=1}^{+\infty} \subset \mathbb{R}_+$ such that:

\begin{enumerate}
    \item $s_n\rightarrow a\ (n\rightarrow +\infty),$
    \item $t_n(s_n-a)\rightarrow v\ (n\rightarrow +\infty).$
\end{enumerate}

The set $C_a(S) = \lbrace v\in \mathbb{R}^n$ is tangent to $S$ at $a\rbrace$ is called the {\it (Peano) tangent cone of $S$ at $a$}.
\end{definicja}

Unfortunately, the above notion turns out to be insufficient. Take for example
$$ 
f:[-1,1]\ni x\mapsto \vert x\vert \in\mathbb{R}. 
$$
We see that the horizontal line $y = 0$ is not contained in the tangent cone of the graph at any of its points, which gives us a counterexample. So it will be necessary to take a slight modification of the classic definition. Thus we define the bisequential tangent cone (BTC) of a given set $X$ at a point $x \in X$ (see e.g. \cite{whitney}).

\begin{definicja}
\label{dwustozek}
We say that a vector $v\in\mathbb{R}^n$ belongs to the {\it bisequential tangent cone of $X$ at $x$} (we write $v\in B_x(X)$) iff there exists a bisequence of points 
$\lbrace (a_n,b_n) \rbrace_{n=1}^{+\infty} \subset X^2$ 
and a sequence of positive numbers 
$\lbrace t_n \rbrace_{n=1}^{+\infty} \subset \mathbb{R}_+$ such that:

\begin{enumerate}
    \item $a_n,b_n\rightarrow x\ (n\rightarrow +\infty),$
    \item $t_n(a_n-b_n)\rightarrow v\ (n\rightarrow +\infty).$
\end{enumerate}

Equivalently, if $\Vert v\Vert=1$, then $v \in B_x(X)$ if there exists a bisequence of points $\lbrace (a_n,b_n) \rbrace_{n=1}^{+\infty} \subset X^2$ for which
$$ \frac{a_n-b_n}{\Vert a_n-b_n\Vert} \rightarrow v, (n\rightarrow +\infty).$$
\end{definicja}

The above definition has a clear geometric interpretation, as the lines tangent to $X$ at $x$ are described by the limits of directions of secants with both ends approaching $x$.

Now let us examine some properties of the BTC.

\begin{uwaga}
There is always $0\in B_x(X)$. We also notice that unless $x$ is isolated in $X$, then $B_x(X) \neq \lbrace 0\rbrace$ and if $v \in B_x(X)$, then the BTC contains a whole line $\lbrace tv: t\in\mathbb{R} \rbrace$ and thus it is a full cone.
\end{uwaga}

In what follows we will assume that a (non-vertical) linear subspace of codimension $1$ can be identified with a~graph of a linear function $L:\mathbb{R}^{n-1} \rightarrow \mathbb{R}$.

\begin{definicja}
We say that a linear subspace as above is tangent to a set $X\in\mathbb{R}^n$ at $a\in X$ if
$(x,L(x))\in B_a(X), \forall x\in\mathbb{R}^{n-1}.$
\end{definicja}

From now on, we will consider the set $X$ being a graph of a~continuous function $f:A \rightarrow \mathbb{R}, A\subset \mathbb{R}^n$, denoted by $\Gamma_f$. For a linear map 
$$
L: \mathbb{R}^n \ni(x_1,\dots,x_n) \mapsto \alpha_1 x_1 + \dots + \alpha_n x_n \in \mathbb{R}
$$
such that $\Gamma_L \subset B_{(a,f(a))} (\Gamma_f)$ we will use simplified notations $B_{a, f(a)}(\Gamma_f) =: B_a(f)$ and $(\alpha_1, \dots, \alpha_n) \in \widehat{B}_a(f)$ or $L\in \widehat{B}_a(f)$, where $\widehat{B}_a(f)$ denotes the set of directions of tangent subspaces of $\Gamma_f \subset \mathbb{R}^n$ at the given point $a$, called the {\it BTC subdifferential of f at a} (see also \cite{yuntong} def. 4.1). 

Later a proper explanation of this name will be provided, as we will compare the BTC and Clarke subdifferentials of a Lipschitz function (see Theorem \ref{inkluzja}).

\begin{uwaga}
We have also to deal with a vertical tangent subspace which is a~hypersurface in $\mathbb{R}^{n+1} = \mathbb{R}^{n} \times \mathbb{R}$ given by
$$
\alpha_1 x_1 +\dots + \alpha_n x_n = 0,\ \alpha_1^2 + \dots + \alpha_n^2 \neq 0.
$$
If the BTC $B_a(f)$ contains such a hypersurface, we will write $\infty\in \widehat{B}_a(f)$.
\end{uwaga}

{\sc Basic examples}

\begin{enumerate}
    \item $f:A \rightarrow \mathbb{R}$ is a constant function, $a\in A$. Then $\widehat{B}_a(f) = \lbrace 0 \rbrace$. Later we will see that the set of tangent directions is reduced to $\lbrace 0 \rbrace$ in $a\in A$ if the function $f$ is of class $C^1$ in some neighbourhood of $a$ and $f'(a)=0$.
    
    \item $f:\mathbb{R}^n \rightarrow \mathbb{R}$ is a linear function, $A\subset \mathbb{R}^n, g = f\mid_A, a\in A$. Then we have $B_a(g) = \lbrace \Gamma_f \rbrace$.
    
    \item $A \subset B \subset \mathbb{R}^n$, $f: A \rightarrow \mathbb{R}, g: B \rightarrow \mathbb{R}$, $f = g\mid_A$, $a\in A$. Then we have $B_a(f) \subset B_a(g)$.
    
    \item If $f: A \rightarrow \mathbb{R}$ is differentiable at $a\in int A$, then always $\nabla f(a) \in \widehat{B}_a(f)$. It may happen however, that the BTC subdifferential will not be reduced to $\nabla f(a)$, see Example \ref{przyklad}.
    
    \item $f:\mathbb{R} \ni x\mapsto \vert x\vert \in\mathbb{R}$. Taking a bisequence
    $$
    \left\lbrace \left( \left( \frac{1}{n},\frac{1}{n}\right) ,\left( \frac{-t}{n}, \frac{t}{n}\right) \right)\right\rbrace_{n=1}^{+\infty}
    $$
    for some $t\in \mathbb{R}_{\geq 0}$, we obtain $\frac{1-t}{1+t} \in \widehat{B}_0(f)$, whence $\widehat{B}_0(f) = [-1,1]$.
    
\item $f: \mathbb{R} \ni x\mapsto x^{\frac{2}{3}} \in \mathbb{R}$. Take the map
$$
g: \mathbb{R} \ni x\mapsto x^{\frac{2}{3}} - \alpha x \in \mathbb{R}, \alpha \in \mathbb{R}
$$
and a bisequence
$$
\lbrace ((a_n,f(a_n)), (b_n,f(b_n))) \rbrace_{n=1}^{+\infty} \subset \mathbb{R}^2,
$$
such that $a_n,b_n \rightarrow 0\ (n\rightarrow +\infty)$ and $g(a_n) = g(b_n), a_n\neq b_n\ \forall n \in \mathbb{N}$. Then
$$
\frac{f(a_n) - f(b_n)}{a_n - b_n} = \frac{a_n^{\frac{2}{3}} - b_n^{\frac{2}{3}}}{a_n - b_n} = \frac{(a_n^{\frac{2}{3}} - \alpha a_n) - (b_n^{\frac{2}{3}} - \alpha b_n)+\alpha a_n - \alpha b_n}{a_n - b_n} = \alpha,
$$
so $\alpha \in \widehat{B}_0(f)$. But since $\alpha$ is chosen arbitrarily, then $\mathbb{R}\subset \widehat{B}_0(f)$. Moreover, by taking a bisequence
$$
\left\lbrace \left( \left( \frac{1}{2^{3n}}, \frac{1}{2^{2n}}\right) ,\left( \frac{1}{2^{3n+3}}, \frac{1}{2^{2n+2}}\right) \right) \right\rbrace_{n=1}^{+\infty},
$$
we obtain
$$
\frac{\frac{1}{2^{2n}} - \frac{1}{2^{2n+2}}}{\frac{1}{2^{3n}} - \frac{1}{2^{3n+3}}} = \frac{2^{n+3} - 2^{n+1}}{2^3 - 1} = 2^n \cdot \frac{6}{7} \rightarrow \infty\ (n\rightarrow +\infty).
$$
So $\widehat{B}_0(f) = \mathbb{R} \cup \lbrace \infty \rbrace$. The BTC equals the full space: $B_0(f) = \mathbb{R}^2$.
\end{enumerate}

\begin{uwaga}
In the case of a single variable function $f:A \rightarrow \mathbb{R}$ we may consider $\widehat{B}_a(f)$ for $a \in A$ as the set of limits $\lim_{n\rightarrow +\infty} \frac{f(a_n) - f(b_n)}{a_n - b_n}$, where $$\lbrace (a_n,b_n) \rbrace_{n=1}^{+\infty} \subset A^2, a_n, b_n \rightarrow x\ (n\rightarrow +\infty), a_n \neq b_n, \forall n\in\mathbb{N}.$$
\end{uwaga}

\begin{uwaga}
\label{przyklad}
Differentiability of $f$ at a point $a$ is not a sufficient condition for $\widehat{B}_a(f) = \lbrace \nabla f(a)\rbrace$. Consider a well-known example
$$
f:\mathbb{R}\ni x\mapsto
\begin{cases}
x^2 \sin (\frac{1}{x}), & x \in \mathbb{R} \setminus \lbrace 0\rbrace\\
0, & x=0
\end{cases}
\in\mathbb{R}.
$$
The function is differentiable at $0$ and $f'(0) = 0$, yet the derivative is not continuous at the origin. Thus we obtain $\widehat{B}_0(f)=[-1,1]$.
\end{uwaga}

In order to find any applications for the BTC subdifferential, some basic properties are needed. The following proposition is well-known for Clarke subdifferential (see for example \cite{r-w}, Theorem 9.8, or \cite{yuntong}, Proposition 4.3). We provide a direct proof based on the definition of the BTC subdifferential.

\begin{propozycja}
\label{nwsr}
If $A$ is open in $\mathbb{R}^n$ and $f:A\rightarrow\mathbb{R}$ is differentiable, then for any point $a\in A$, the following conditions are equivalent:

\begin{enumerate}
    \item $f$ is of class $C^1$ in some neighbourhood of $a$,
    \item $\widehat{B}_a(f) = \lbrace \nabla f(a)\rbrace$.
\end{enumerate}
\end{propozycja}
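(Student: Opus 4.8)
The plan is to prove the two implications separately, using throughout the fact, recorded in the fourth basic example, that $\nabla f(a)\in\widehat{B}_a(f)$ already holds for any differentiable $f$; thus in each direction only the reverse content needs work. For (1)$\Rightarrow$(2), fix a ball $U\subseteq A$ centred at $a$ on which $\nabla f$ is continuous, and let $\alpha\in\widehat{B}_a(f)$. By definition $\Gamma_L\subset B_{(a,f(a))}(\Gamma_f)$ for $L(x)=\alpha\cdot x$, so for every unit vector $u$ the point $(u,\alpha\cdot u)$ lies in the BTC; unwinding Definition \ref{dwustozek} yields $s_k,r_k\in A$ with $s_k\neq r_k$, $s_k,r_k\to a$, $\frac{s_k-r_k}{\Vert s_k-r_k\Vert}\to u$ and $\frac{f(s_k)-f(r_k)}{\Vert s_k-r_k\Vert}\to\alpha\cdot u$. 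For $k$ large the segment $[r_k,s_k]$ lies in the convex set $U$, so the mean value theorem supplies $\xi_k\in[r_k,s_k]$ with $f(s_k)-f(r_k)=\nabla f(\xi_k)\cdot(s_k-r_k)$. Dividing by $\Vert s_k-r_k\Vert$ and passing to the limit, the continuity of $\nabla f$ together with $\xi_k\to a$ forces $\alpha\cdot u=\nabla f(a)\cdot u$. Since $u$ was arbitrary, $\alpha=\nabla f(a)$, giving (2).

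For (2)$\Rightarrow$(1) I would argue by contraposition: assuming $f$ is not of class $C^1$ on any neighbourhood of $a$, I would exhibit some $\alpha\neq\nabla f(a)$ in $\widehat{B}_a(f)$. Failure of (1) means the discontinuity set of $\nabla f$ accumulates at $a$, so there exist points $p_k\to a$ at which $\nabla f$ is discontinuous, and near each one a point $q_k$ with $\Vert\nabla f(q_k)-\nabla f(p_k)\Vert\ge\varepsilon_k>0$. The strategy is to convert such a gradient gap into a secant bisequence: near $p_k$ select a pair $(a_k,b_k)$ whose difference quotient, via the mean value theorem, samples a gradient value $\nabla f(\zeta_k)$ bounded away from $\nabla f(a)$, record the secant direction $u_k=\frac{a_k-b_k}{\Vert a_k-b_k\Vert}$, and pass to a subsequence with $u_k\to u$ and $\frac{f(a_k)-f(b_k)}{\Vert a_k-b_k\Vert}\to s$ where $s\neq\nabla f(a)\cdot u$. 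This produces a tangent direction $(u,s)\in B_{(a,f(a))}(\Gamma_f)$ lying off the gradient hyperplane.

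The main obstacle sits entirely in this second implication, and in two linked difficulties. First, one off-hyperplane tangent direction only shows that $B_{(a,f(a))}(\Gamma_f)$ is strictly larger than $\Gamma_L$ for $L=\nabla f(a)$, whereas $\alpha\in\widehat{B}_a(f)$ with $\alpha\neq\nabla f(a)$ requires a whole second linear graph $\Gamma_{L'}$ inside the BTC, that is, a coherent family of secant bisequences realizing the slope $\alpha\cdot u$ simultaneously in \emph{every} direction $u$; assembling the local gradient gaps into one globally consistent $\alpha$ is the delicate construction. Second, the gaps $\varepsilon_k$ may deteriorate as $p_k\to a$, and this interacts with the very argument of the first paragraph: the mean value computation there shows that continuity of $\nabla f$ at the single point $a$ already forces $\widehat{B}_a(f)=\{\nabla f(a)\}$, so any successful construction must genuinely exploit the discontinuity of $\nabla f$ at points \emph{near} $a$ rather than at $a$ itself, and must prevent the sampled slopes from collapsing back onto $\nabla f(a)\cdot u$ in the limit. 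I would regard establishing that the neighbourhood hypothesis in (1) is exactly what is needed to carry out this synthesis as the crux of the proof.
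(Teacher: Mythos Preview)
Your (1)$\Rightarrow$(2) is correct and is essentially the paper's argument: choose a bisequence realising a given tangent direction, apply the one-variable Mean Value Theorem on each segment $[r_k,s_k]$ to get $\xi_k\in[r_k,s_k]$, and use continuity of $\nabla f$ to send $\nabla f(\xi_k)\to\nabla f(a)$.

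For (2)$\Rightarrow$(1) there is a genuine gap. Your proposal is only a contrapositive sketch, and the obstruction you yourself isolate is in fact fatal to that strategy. You correctly note that the computation of the first paragraph needs only continuity of $\nabla f$ at the single point $a$; hence whenever $\nabla f$ is continuous at $a$---even if it is discontinuous at a sequence $p_k\to a$---every bisequence with both endpoints tending to $a$ has secant slope tending to $\nabla f(a)\cdot u$, and $\widehat{B}_a(f)=\{\nabla f(a)\}$ already follows. But the secants you propose to sample near the $p_k$ still have both endpoints tending to $a$, so by that same computation their slopes must collapse onto $\nabla f(a)\cdot u$; there is no way to ``prevent'' this, and hence no off-hyperplane tangent direction can be manufactured, let alone a full second linear graph $\Gamma_{L'}\subset B_a(f)$. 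The paper avoids contraposition altogether: it argues directly that $\widehat{B}_a(f)=\{\nabla f(a)\}$ forces a uniform estimate $\lvert (f(x)-f(y))-f'(a).(x-y)\rvert\leq\varepsilon$ for all $x,y$ in some ball $\mathbb{B}(a,r_a)$, then for each $b$ near $a$ invokes an analogous radius $r_b$ and compares on $\mathbb{B}(a,r_a)\cap\mathbb{B}(b,r_b)$ to obtain $\lvert f'(b).(x-y)-f'(a).(x-y)\rvert\leq 2\varepsilon$ and hence continuity of $f'$. If you wish to repair your argument you should abandon the contrapositive and extract such uniform secant control directly from the hypothesis; your own remark about ``single point versus neighbourhood'' is exactly the subtlety that has to be confronted at that stage.
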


\begin{proof}
$\ $

(1) $\Rightarrow$ (2) 
Assume that there exists $L \in \mathcal{L} (\mathbb{R}^n; \mathbb{R})$, $L \neq f'(a)$ for which we have $\nabla L \in \widehat{B}_a(f)$. Pick $b\in\mathbb{R}^n$ such that $L.b \neq f'(a).b$ and set $\varepsilon > 0$ for which $\vert L.b - f'(a).b \vert > 3 \varepsilon$. Then choose a bisequence $\lbrace (a_n,b_n) \rbrace_{n=1}^{+\infty}\subset  A^2$ such that $a_n,b_n \rightarrow a\ (n\rightarrow +\infty)$ and
$$
t_n(a_n-b_n,f(a_n)-f(b_n))\rightarrow (b,L.b)\ (n\rightarrow +\infty)
$$
for some sequence $\lbrace t_n \rbrace_{n=1}^{+\infty} \subset \mathbb{R}$. Then there exist $N_1,N_2 \in\mathbb{N}$ such that
\begin{align*}
    \vert t_n(f(a_n)-f(b_n)) - L.b \vert < \varepsilon, \quad & \forall n \geq N_1,\\
    \vert L.(t_n(a_n-b_n)) - L.b \vert < \varepsilon, \quad & \forall n \geq N_2.
\end{align*}
Let $N=max\lbrace N_1,N_2\rbrace$. We notice that for $n > N$
\begin{align*}
& \vert t_n(f(a_n)-f(b_n)) - L.(t_n(a_n-b_n)) \vert\\
\leq\ & \vert t_n(f(a_n)-f(b_n)) - L.b \vert + \vert L.(t_n(a_n-b_n)) - L.b \vert < 2 \varepsilon.
\end{align*}
The function $f\mid_{[a_n,b_n]}$ for $n>N$ can be identified by a projection with a single variable function and the basic Mean Value Theorem can be applied. We find a~point $c_n$ on the segment $[a_n,b_n]$ for which
$$
f'(c_n).(t_n(a_n-b_n)) = (f\mid_{[a_n,b_n]})'(c_n).(t_n(a_n-b_n)) = t_n(f(a_n)-f(b_n)).
$$
Then we obtain
\begin{align*}
& \vert f'(c_n).(t_n(a_n-b_n)) - f'(a).(t_n(a_n-b_n)) \vert\\ 
=\ & \vert t_n(f(a_n)-f(b_n)) - f'(a).(t_n(a_n-b_n)) \vert\\
\geq\ & \vert L.(t_n(a_n-b_n)) - f'(a).(t_n(a_n-b_n)) \vert\\
& - \vert t_n(f(a_n)-f(b_n)) - L.(t_n(a_n-b_n)) \vert\\
>\ & \vert L.(t_n(a_n-b_n)) - f'(a).(t_n(a_n-b_n)) \vert - 2 \varepsilon.
\end{align*}
Taking a limit of both sides when $n \rightarrow +\infty$, we have
$$
0 = \vert f'(a).b - f'(a).b \vert \geq \vert L.b - f'(a).b \vert - 2 \varepsilon > 3 \varepsilon - 2 \varepsilon = \varepsilon,
$$
which yields a contradiction.

$\ $

(2) $\Rightarrow$ (1) Fix $\varepsilon >0$. Then, by assumptions, the directions of secants in $\Gamma_f$ with both ends within a tight neighbourhood of $(a, f(a))$ are close to the direction determined by the derivative $f'(a)$. In other words:
$$
\exists r_a>0: (\Vert x-a \Vert < r_a, \Vert y-a\Vert < r_a) \Rightarrow (\vert(f(x)-f(y))-f'(a).(x-y)\vert\leq\varepsilon)
$$
for $x,y \in A$. Now set $b\in A, \vert b-a\vert <r_a$ and define $r_b$ analogically. Then for $x,y \in \mathbb{B}(a,r_a)\cap \mathbb{B}(b,r_b)$ we have:
\begin{multline*}
\vert f'(b).(x-y)-f'(a).(x-y)\vert\leq\\
\vert f'(b).(x-y)-(f(x)-f(y))\vert + \vert(f(x)-f(y))-f'(a).(x-y)\vert\leq 2\varepsilon,
\end{multline*}
which gives the continuity of $f'$, so $f$ is of class $C^1$ near $a$.
\end{proof}

Another useful property refers to the behaviour of the BTC subdifferential of a~sum of two functions (cf. \cite{r-w}, exercise 8.8).

\begin{propozycja}
\label{sum}
Let $A \subset \mathbb{R}^n$, $a \in A$. For two continuous functions $f,g :A \rightarrow \mathbb{R}$ assume that $g$ is of class $C^1$ near $a$. Then $\widehat{B}_a(f+g) = \widehat{B}_a(f) + \nabla g(a)$.
\end{propozycja}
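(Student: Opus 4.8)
\emph{Plan.} The whole statement reduces to one observation about the $C^1$ summand $g$: along any bisequence converging to $a$, the rescaled increments of $g$ are controlled in the limit by $\nabla g(a)$. So the first step I would carry out is the following lemma. \emph{If $g$ is of class $C^1$ on a ball $\mathbb{B}(a,\rho)$, if $\{(a_n,b_n)\}\subset A^2$ with $a_n,b_n\to a$, and if $\{t_n\}\subset\mathbb{R}_+$ satisfies $t_n(a_n-b_n)\to v$, then $t_n\big(g(a_n)-g(b_n)\big)\to\nabla g(a)\cdot v$.} To prove it, note that for $n$ large both $a_n,b_n$ lie in $\mathbb{B}(a,\rho)$, hence the whole segment $[a_n,b_n]$ does; restricting $g$ to the line through $a_n,b_n$ and applying the classical Mean Value Theorem (exactly as in the proof of Proposition \ref{nwsr}) produces $c_n\in[a_n,b_n]$ with $g(a_n)-g(b_n)=\nabla g(c_n)\cdot(a_n-b_n)$. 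Since $c_n\to a$ and $\nabla g$ is continuous at $a$, we get $\nabla g(c_n)\to\nabla g(a)$, and therefore $t_n\big(g(a_n)-g(b_n)\big)=\nabla g(c_n)\cdot\big(t_n(a_n-b_n)\big)\to\nabla g(a)\cdot v$ (a convergent sequence of vectors paired with a convergent sequence of vectors, so no issue with $t_n$ blowing up).

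Next I would use the lemma to transfer bisequences between $\Gamma_f$ and $\Gamma_{f+g}$. Fix $(v,w)\in B_a(f)$, realized by a bisequence $\{(a_n,b_n)\}$ and positive numbers $\{t_n\}$ with $t_n(a_n-b_n)\to v$ and $t_n\big(f(a_n)-f(b_n)\big)\to w$. Writing
$$t_n\big((a_n-b_n),\,(f+g)(a_n)-(f+g)(b_n)\big)=t_n\big((a_n-b_n),\,f(a_n)-f(b_n)\big)+\big(0,\,t_n(g(a_n)-g(b_n))\big)$$
and applying the lemma to the last term, the right-hand side tends to $(v,w)+(0,\nabla g(a)\cdot v)$, so $(v,\,w+\nabla g(a)\cdot v)\in B_a(f+g)$. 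Thus the shear $(v,w)\mapsto(v,\,w+\nabla g(a)\cdot v)$ maps $B_a(f)$ into $B_a(f+g)$. Applying the same argument with $f$ replaced by $f+g$ and $g$ replaced by $-g$ (again of class $C^1$ near $a$, with gradient $-\nabla g(a)$) yields the reverse inclusion, so altogether
$$(v,w)\in B_a(f)\iff (v,\,w+\nabla g(a)\cdot v)\in B_a(f+g).$$

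Finally I would translate this equivalence into the language of the BTC subdifferential. By definition $L=(\alpha_1,\dots,\alpha_n)\in\widehat{B}_a(f)$ means $(x,L\cdot x)\in B_a(f)$ for every $x\in\mathbb{R}^n$; by the equivalence above this holds iff $\big(x,(L+\nabla g(a))\cdot x\big)\in B_a(f+g)$ for every $x$, that is, iff $L+\nabla g(a)\in\widehat{B}_a(f+g)$. Hence $\widehat{B}_a(f+g)=\widehat{B}_a(f)+\nabla g(a)$. The vertical case is handled the same way: the shear $(x,t)\mapsto(x,\,t+\nabla g(a)\cdot x)$ preserves the first coordinate, hence maps each vertical hyperplane $\{\alpha\cdot x=0\}$ onto itself, so $\infty\in\widehat{B}_a(f)\iff\infty\in\widehat{B}_a(f+g)$, consistent with the convention $\infty+\nabla g(a)=\infty$.

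I expect the only genuinely delicate point to be the lemma, specifically the legitimacy of the Mean Value Theorem along each segment $[a_n,b_n]$ — this is exactly where one needs $g$ to be $C^1$ \emph{near} $a$ rather than merely differentiable at $a$, compare Remark \ref{przyklad} — together with the continuity of $\nabla g$ used to pass to the limit. Everything after the lemma is bookkeeping, and the two inclusions are symmetric under $g\leftrightarrow -g$.
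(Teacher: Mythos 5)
Your proof is correct and takes essentially the same route as the paper: the inclusion $\widehat{B}_a(f)+\nabla g(a)\subset\widehat{B}_a(f+g)$ is obtained directly from the mean-value-plus-continuity-of-$\nabla g$ argument (the paper just says ``use Proposition~\ref{nwsr}''), and the reverse inclusion follows by applying the same rule to $f+g$ and $-g$. You have simply filled in the details (the auxiliary lemma and the treatment of the vertical case) that the paper's two-line proof leaves implicit.
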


\begin{proof}
The inclusion "$\supset$" can be obtained directly (use \ref{nwsr}). Then we get "$\subset$" by applying the above rule to $f+g$ and $-g$.
\end{proof}

\begin{uwaga}
We necessarily have to assume that one of the functions is of class $C^1$. Consider as an example
$$
f:[-1,1]\ni x\mapsto \vert x\vert \in\mathbb{R},
$$
$$
g:[-1,1]\ni x\mapsto -\vert x\vert \in\mathbb{R}.
$$
We see that $1\in \widehat{B}_0(f)$ and $1\in \widehat{B}_0(g)$. However, $2 \notin \widehat{B}_0(f+g)=\lbrace 0\rbrace$.
\end{uwaga}

Now we state generalized Rolle and Lagrange Theorems for the BTC.

\begin{twierdzenie}[\sc Generalized Rolle]
\label{rolle}
Let $K$ be a compact subset of $\mathbb{R}^n$ such that $int K \neq \emptyset$ and let $f:K\rightarrow\mathbb{R}$ be a continuous function, $f \equiv C$ for some $C \in \mathbb{R}$ on $\partial K$. Then
$$
\exists c\in int K:\ 0\in \widehat{B}_c(f).
$$
\end{twierdzenie}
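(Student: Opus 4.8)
The plan is to imitate the classical proof of Rolle's theorem, replacing the use of Fermat's stationary-point lemma by a one-dimensional substitute that survives the loss of differentiability; in place of the derivative the intermediate value theorem will do the work. First, since $K$ is compact and $f$ continuous, $f$ attains on $K$ a maximum $M$ and a minimum $m$. If $M=m$, then $f$ is constant and, as recorded among the basic examples, $0\in\widehat{B}_c(f)=\{0\}$ for any $c\in\mathrm{int}\,K$ (nonempty by hypothesis), so we are done. Otherwise $M>m$, and since $C\in[m,M]$ at least one of $M>C$, $m<C$ holds; replacing $f$ by $-f$ if necessary (which changes $\widehat{B}_c$ only by a sign, hence not its containing $0$), we may assume $M>C$. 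As $f\equiv C<M$ on $\partial K$, the value $M$ is attained at some $c\in\mathrm{int}\,K$.

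The heart of the argument is the following one-variable claim: \emph{if $\varphi:(-\eta,\eta)\to\mathbb{R}$ is continuous and $\varphi(0)=:\mu$ is the maximum of $\varphi$, then there exist $s_k,r_k\to 0$ with $s_k>r_k$ and $\varphi(s_k)=\varphi(r_k)$.} To see this, fix $k$ large enough that $1/k<\eta$ and distinguish two cases. If $\varphi\equiv\mu$ on $[0,1/k]$, or symmetrically on $[-1/k,0]$, let $s_k,r_k$ be two distinct points of that interval. Otherwise $\min_{[0,1/k]}\varphi<\mu$ and $\min_{[-1/k,0]}\varphi<\mu$, so pick $\lambda_k$ with $\max\{\min_{[0,1/k]}\varphi,\,\min_{[-1/k,0]}\varphi\}<\lambda_k<\mu$; since $\varphi(0)=\mu>\lambda_k$ while $\varphi$ drops below $\lambda_k$ somewhere on each of the two intervals, the intermediate value theorem produces $s_k\in(0,1/k]$ and $r_k\in[-1/k,0)$ with $\varphi(s_k)=\lambda_k=\varphi(r_k)$. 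In all cases $s_k\neq r_k$, both tend to $0$, and the secant slope $(\varphi(s_k)-\varphi(r_k))/(s_k-r_k)$ equals $0$.

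Now I would assemble the pieces. Fix an arbitrary $v\in\mathbb{R}^n\setminus\{0\}$. Since $c\in\mathrm{int}\,K$, there is $\eta>0$ with $c+tv\in K$ for all $t\in(-\eta,\eta)$, and $\varphi(t):=f(c+tv)$ is continuous with a maximum at $t=0$. Applying the claim, set $a_k:=c+s_kv$, $b_k:=c+r_kv\in K$ and $t_k:=1/(s_k-r_k)>0$; then $a_k,b_k\to c$ and
\[
t_k\bigl(a_k-b_k,\ f(a_k)-f(b_k)\bigr)=\Bigl(v,\ \frac{\varphi(s_k)-\varphi(r_k)}{s_k-r_k}\Bigr)=(v,0),
\]
so $(v,0)\in B_{(c,f(c))}(\Gamma_f)$. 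Together with $(0,0)\in B_{(c,f(c))}(\Gamma_f)$ and the arbitrariness of $v$, this shows that the horizontal subspace $\Gamma_0=\mathbb{R}^n\times\{0\}$ is contained in $B_c(f)$, i.e. $0\in\widehat{B}_c(f)$.

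I expect the one-variable claim to be the only genuine obstacle: in the differentiable setting it is nothing but Fermat's lemma, whereas for a merely continuous function one has to actually construct pairs of points with equal values collapsing to $0$, which is what forces the case distinction above — in particular the degenerate possibility that $\varphi$ is locally constant equal to its maximum on one side of $0$, where the intermediate-value argument breaks down and same-side secants must be used instead. The remaining ingredients (extraction of an interior extremum by compactness, reduction to line segments, and the sign bookkeeping ensuring $t_k>0$) are routine.
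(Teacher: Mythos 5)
Your proof is correct and follows essentially the same strategy as the paper: reduce to an extremum attained at an interior point $c$, slice along a line through $c$ in an arbitrary direction $v$, and use the intermediate value theorem to manufacture pairs of equal-value points straddling $c$ (with a separate, elementary treatment of the degenerate case where the function is locally constant at the extremal level), yielding $(v,0)\in B_c(f)$ for every $v$. The only notable difference is that you work entirely in a small interval $(-\eta,\eta)$ around $c$ and choose the intermediate levels $\lambda_k$ between the one-sided minima and $\mu$, whereas the paper extends the segment out to points $a,b\in\partial K$ and uses levels interpolating between $f(c)$ and the boundary value $C$; your localization is, if anything, slightly cleaner since it avoids having to produce boundary points $a,b$ with $c\in[a,b]\subset K$.
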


\begin{proof}
If $f$ is constant, then the theorem obviously holds. Otherwise, $f$ reaches two different extrema in $K$ by Weierstrass Theorem. Without any loss of generality, we may assume that
$$
\exists c\in int K:f(c)=\inf_{x\in K}f(x),
$$
We consider two cases.

\begin{enumerate}
    \item Case $n=1$.
    
    We consider separately the situation in which there exists a sequence $\lbrace c_n \rbrace_{n=1}^{+\infty} \subset K \setminus \lbrace c \rbrace$ of pairwise different points, convergent to $c$, such that $f(c_n) = f(c), \forall n\in \mathbb{N}$. Then a bisequence $\lbrace (c_n,c_{n+1}) \rbrace_{n=1}^{+\infty}$ together with a~sequence of numbers $\lbrace \frac{1}{c_n-c_{n+1}} \rbrace_{n=1}^{+\infty}$ is sufficient to obtain $(1,0) \in B_c(f)$.
    
    Otherwise, since $f$ is continuous, by the Darboux Theorem we are able to construct a bisequence $\lbrace (a_n,b_n) \rbrace_{n=1}^{+\infty} \subset K^2$ such that 
    $$
    a_n\rightarrow c_+,b_n\rightarrow c_- (n\rightarrow+\infty),
    $$
    $$
    f(a_n) = f(b_n) = f(c) + \frac{C-f(c)}{n}, \forall n\in\mathbb{N}.
    $$
    Taking the sequence of numbers $\lbrace \frac{1}{a_n-b_n} \rbrace_{n=1}^{+\infty}$, we obtain
    $$
    \frac{1}{a_n-b_n} (a_n-b_n, f(a_n)-f(b_n)) = (1,0).
    $$

\item Case $n>1$.

Fix $x\in\mathbb{R}^n$. We will show that $(x,0)\in B_c(f)$. We choose $a,b \in \partial K$ in such a way that $c\in[a,b]\subset K$ and the point $c+x$ belongs to a line going through $a$ and $b$. Then we can identify $f\mid_{[a,b]}$ with a single variable function and the rest of the proof is done as in the previous case with the sequence of numbers multiplied by $\pm\Vert x\Vert$.
\end{enumerate}
\end{proof}

\begin{twierdzenie}[\sc Generalized Lagrange]
\label{lagrange}
Let $K$ be a compact subset of $\mathbb{R}^n$, $int K \neq \emptyset$ and let $f:K\rightarrow\mathbb{R}$ be a continuous function, $f(x) = L.x+C, \forall x  \in \partial K$, for some $L \in \mathcal{L} (\mathbb{R}^n; \mathbb{R}), C\in\mathbb{R}$. Then
$$
\exists\ c\in int K:\ L\in \widehat{B}_c(f).
$$
\end{twierdzenie}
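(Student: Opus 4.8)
The plan is to mimic the classical deduction of Lagrange's theorem from Rolle's theorem: subtract the affine boundary data so as to invoke Theorem \ref{rolle}, and then transport the conclusion back through the additivity rule of Proposition \ref{sum}.

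Concretely, I would set $\ell : \mathbb{R}^n \to \mathbb{R}$, $\ell(x) := L.x + C$, and consider $g := f - \ell|_K : K \to \mathbb{R}$. Since $f$ and $\ell$ are continuous, so is $g$, and the hypothesis $f(x) = L.x + C$ on $\partial K$ gives $g \equiv 0$ on $\partial K$. Thus $g$ meets the assumptions of the Generalized Rolle Theorem (with the constant on the boundary equal to $0$), so there is a point $c \in int K$ with $0 \in \widehat{B}_c(g)$.

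It then remains to pass from $g$ back to $f = g + \ell$. The function $\ell$ is of class $C^1$ (indeed affine) on all of $\mathbb{R}^n$, with $\nabla \ell \equiv L$; in particular it is $C^1$ near $c$, and since $c \in int K$ the restriction $\ell|_K$ is still $C^1$ near $c$, as required by Proposition \ref{sum}. Applying that proposition with the pair $(g,\ell)$ at the point $c$ yields
$$
\widehat{B}_c(f) = \widehat{B}_c(g + \ell) = \widehat{B}_c(g) + \nabla \ell(c) = \widehat{B}_c(g) + L.
$$
As $0 \in \widehat{B}_c(g)$, we conclude $L \in \widehat{B}_c(f)$, which is exactly the desired assertion.

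I do not anticipate a genuine obstacle here: the whole argument is essentially a one-line reduction once Theorem \ref{rolle} and Proposition \ref{sum} are at hand, so the substantive content already sits in those two results. The only points calling for a moment's care are that the additive constant $C$ is harmless — it is absorbed into the $C^1$ summand $\ell$, whose gradient equals $L$ independently of $C$ — and that Proposition \ref{sum} is being applied at an interior point $c$, which is precisely what makes the phrase ``$C^1$ near $c$'' meaningful for a function defined only on the compact set $K$.
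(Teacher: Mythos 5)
Your proposal is correct and follows essentially the same route as the paper: subtract the affine function to reduce to the Generalized Rolle Theorem, then recover the conclusion via the sum rule of Proposition \ref{sum} applied at the interior point $c$. The only cosmetic difference is that the paper subtracts only $L.x$ (so its auxiliary function is constant equal to $C$ on $\partial K$ rather than identically zero), which changes nothing.
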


\begin{proof}
Let us consider the map $g:K\ni x\mapsto f(x)-L.x\in\mathbb{R}$. It is constant on $\partial K$, so by \ref{rolle} $\exists c\in int K:0\in \widehat{B}_c(f)$. Obviously $\widehat{B}_c(L) = \lbrace \nabla L\rbrace$ and \ref{sum} applied for $g$ and $L$ ends the proof.
\end{proof}

\begin{uwaga}
It is necessary to assume that $K$ is compact. Consider as an example the function $f(x,y) = x^2y$ and $K = \lbrace (x,y)\in \mathbb{R}^2: -1 \leq x \leq 1 \rbrace$. We have $f(x,y) = y$ on $\partial K$ but $\widehat{B}_{(x,y)}(f) = (2xy, x^2) \neq (0,1)$ inside $K$.
\end{uwaga}

\section{Normal cones}

Let $L$ be a linear subspace of $\mathbb{R}^n$. Any vector $v\in\mathbb{R}^n$ can be uniquely presented in a form $v=v_1+v_2$, where $v_1\in L,v_2\in L^{\perp}$ and $\Vert v-v_1\Vert=\inf\lbrace\Vert v-w\Vert:w\in L\rbrace$. The map
$ P_L:\mathbb{R}^n\ni v\mapsto v_1\in L $
is the orthogonal projection on a subspace $L$.

Now we consider the following situation. Let $S\subset\mathbb{R}^n$. For $a\in S$ we recall the normal cone of $S$ at $a$.

\begin{definicja}
The {\it normal cone of $S$ at $a$} is the set
$$
N_a(S)=\lbrace v\in\mathbb{R}^n:\langle v,w\rangle\leq 0, \forall w\in C_a(S)\rbrace.
$$
\end{definicja}

If our considered set $S$ is a graph of a function $f:A \rightarrow \mathbb{R}$, $A \subset \mathbb{R}^{n-1}$, then for $a\in A$ we will use the following notation:
$$
C_a(f) := C_{(a,f(a))}(\Gamma_f),\ N_a(f) = N_{(a,f(a))}(\Gamma_f).
$$

\begin{uwaga}
If the function $f:A \rightarrow \mathbb{R}$, $A \subset \mathbb{R}^{n-1}$ is differentiable at $a\in int A$, then $C_a(f) = \Gamma_{f'(a)}$ and $N_a(f) = (\Gamma_{f'(a)})^{\perp}$.
\end{uwaga}

For the sake of further considerations, we introduce a modified definition of an angle between two (non-zero) vectors.

\begin{definicja}
For $x,y \in\mathbb{R}^n$, $x,y\neq 0$ we define the {\it angle between $x$ and $y$} as
$$
\angle(x,y) = \frac{\langle x,y\rangle}{\Vert x\Vert\Vert y\Vert} \in[-1,1].
$$
\end{definicja}

\begin{uwaga}
By the above definition, the second condition in \ref{styczny} coincides with $\angle(s_n-a,v)\rightarrow 1\ (n\rightarrow +\infty)$. The set $N_a(S)$ is $\lbrace v\in\mathbb{R}^n: \angle(v,w) \leq 0, \forall w \in C_a(S) \rbrace$.
\end{uwaga}

We will show the Mean Value Theorem stated for a continuous function using the normal cone. The following known result will be crucial.

\begin{propozycja}
\label{char}
Let $a\in S\subset\mathbb{R}^n$ and let $x\in\mathbb{R}^n$ be a point for which the Euclidean distance from $S$ is realized at $a$, that is
$$
\Vert x-a\Vert=\inf\lbrace\Vert x-y\Vert:y\in S\rbrace.
$$
Then $x-a\in N_a(S)$.
\end{propozycja}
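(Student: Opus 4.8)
The statement is the standard variational characterization of the normal cone, and the natural route is to unwind both definitions and play the minimality of $a$ against an arbitrary tangent sequence. Fix an arbitrary $v\in C_a(S)$; the goal is to show $\langle x-a,v\rangle\le 0$, since then $x-a$ pairs non-positively with every element of $C_a(S)$ and hence lies in $N_a(S)$ by definition. If $v=0$ there is nothing to prove, so assume $v\neq 0$ and pick, as in Definition \ref{styczny}, sequences $\{s_n\}\subset S$ with $s_n\to a$ and $\{t_n\}\subset\mathbb{R}_+$ with $t_n(s_n-a)\to v$. Note that $s_n\neq a$ for large $n$ (otherwise a subsequence of $t_n(s_n-a)$ is $0\neq v$), and since $\|s_n-a\|\to 0$ while $\|t_n(s_n-a)\|\to\|v\|>0$, we necessarily have $t_n\to+\infty$.

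\textbf{Key computation.} Because the distance from $x$ to $S$ is attained at $a$, we have $\|x-s_n\|\ge\|x-a\|$ for every $n$. Writing $x-s_n=(x-a)-(s_n-a)$ and expanding the squared norm gives
$$
\|x-a\|^2-2\langle x-a,\,s_n-a\rangle+\|s_n-a\|^2=\|x-s_n\|^2\ge\|x-a\|^2,
$$
hence
$$
2\langle x-a,\,s_n-a\rangle\le\|s_n-a\|^2.
$$
Multiplying by $t_n>0$ and using $t_n\|s_n-a\|^2=\tfrac1{t_n}\|t_n(s_n-a)\|^2$, we obtain
$$
2\big\langle x-a,\,t_n(s_n-a)\big\rangle\le\frac{1}{t_n}\,\big\|t_n(s_n-a)\big\|^2.
$$

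\textbf{Passing to the limit.} As $n\to+\infty$ the left-hand side tends to $2\langle x-a,v\rangle$, while the right-hand side tends to $\tfrac1{t_n}\|v\|^2\to 0$ because $t_n\to+\infty$ and $\|t_n(s_n-a)\|\to\|v\|$ is bounded. Therefore $2\langle x-a,v\rangle\le 0$, i.e. $\langle x-a,v\rangle\le 0$, which is exactly what we needed. Since $v\in C_a(S)$ was arbitrary, $x-a\in N_a(S)$.

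\textbf{Expected difficulty.} There is no serious obstacle here; the only points requiring a word of care are the observation that $t_n\to+\infty$ (so that the error term $\tfrac1{t_n}\|t_n(s_n-a)\|^2$ genuinely vanishes) and the trivial separate treatment of $v=0$. Everything else is a one-line expansion of a square and a limit.
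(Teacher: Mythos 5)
Your proof is correct, but it takes a genuinely different route from the paper. You argue directly: for an arbitrary nonzero $v\in C_a(S)$ with witnessing sequences $s_n\to a$, $t_n(s_n-a)\to v$, you expand $\Vert x-s_n\Vert^2\geq\Vert x-a\Vert^2$ to get $2\langle x-a,s_n-a\rangle\leq\Vert s_n-a\Vert^2$, multiply by $t_n$, and kill the right-hand side using $t_n\to+\infty$. This is the standard one-line variational argument, and your two points of care (ruling out $s_n=a$ along a subsequence, and justifying $t_n\to+\infty$ from $\Vert s_n-a\Vert\to 0$ together with $t_n\Vert s_n-a\Vert\to\Vert v\Vert>0$) are exactly the ones that need saying. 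The paper instead argues by contradiction and geometrically: assuming $x\notin N_0(S)$ (after translating $a$ to the origin), it picks $v\in C_0(S)$ with $\langle x,v\rangle>0$, normalizes $v$ to lie on the sphere $\partial\mathbb{B}(x,\Vert x\Vert)$ through the origin, and shows that the tangent points $a_n$ eventually fall strictly inside the ball $\mathbb{B}(x,\Vert x\Vert)$ --- hence strictly closer to $x$ than $a$ is --- by comparing $a_n$ with the second intersection point $b_n$ of the line through $0$ and $a_n$ with that sphere. The two arguments buy different things: yours is shorter, purely algebraic, and avoids any case analysis or auxiliary construction; the paper's makes the geometric picture explicit (the open ball of radius $\Vert x-a\Vert$ about $x$ must miss $S$, so no tangent direction can point into it), which fits the geometric flavour of the surrounding section. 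Either is a complete proof of the proposition.
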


\begin{proof}
We may assume that $a=0$. If $x \notin N_0(S)$, then by definition there exists a~point $v\in C_0(S), v\neq 0$ for which $\langle x,v\rangle>0$. Without loss of generality we assume that $v \in \partial \mathbb{B}(x, \Vert x\Vert)$. According to \ref{styczny},
$$
\exists \lbrace (a_n) \rbrace_{n=1}^{+\infty} \subset S^2,
\lbrace t_n \rbrace_{n=1}^{+\infty} \subset \mathbb{R}_+:
a_n \rightarrow 0\ (n\rightarrow +\infty),
t_n a_n \rightarrow v\ (n\rightarrow +\infty).
$$
Then $\angle(a_n,x) \rightarrow \angle(v,x)>0\ (n\rightarrow +\infty)$, so
$$
\exists N \in \mathbb{N}: \angle(a_n,x)>0, \forall n\geq N,
$$
This can be interpreted geometrically as the statement that the lines going through $0$ and $a_n$ intersect $\partial \mathbb{B}(x, \Vert x\Vert)$ at some points $b_n \neq 0$ for all $n\geq N$. We see that $b_n\rightarrow v\ (n\rightarrow +\infty)$, so for $n$ sufficiently large, the point $a_n$ must be inside the segment $[0,b_n]$. Otherwise we would have
$$
0 = \lim_{n\rightarrow +\infty} \Vert a_n \Vert \geq \lim_{n \rightarrow +\infty} \Vert b_n \Vert = \Vert v\Vert>0.
$$
But then $\Vert x-a_n \Vert < \Vert x \Vert$ for some $n \in \mathbb{N}$. This contradiction finishes the proof.
\end{proof}

\begin{uwaga}
The converse implication is not always true. Take as an example the graph of $f(x) = \vert \vert x \vert - 1 \vert$. Then the point $(0,7)$ belongs to $N_0(f)$ but its Euclidean distance from the graph is not realized at $(0,1)$.
\end{uwaga}

Now to the main theorem.

\begin{twierdzenie}
\label{lagrange2}
Let $K$ be a compact subset of $\mathbb{R}^n$, $int K \neq \emptyset$ and let $f:K\rightarrow\mathbb{R}$ be a continuous function such that $f(x) = L.x+C, \forall x \in \partial K$ for some $L \in \mathcal{L} (\mathbb{R}^n; \mathbb{R})$, $C \in \mathbb{R}$. Then
$$
\exists c\in int K: (N_c(f)\cap (\Gamma_L)^{\perp}) \setminus \lbrace 0\rbrace \neq\emptyset.
$$
In another words, a normal cone at some point $c$ inside the set $K$ contains a non-zero vector perpendicular to $\Gamma_L$.
\end{twierdzenie}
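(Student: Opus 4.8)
The plan is to reduce to the case $L = 0$ exactly as in Theorem \ref{lagrange}: replace $f$ by $g(x) = f(x) - L.x$, which is continuous on $K$, constant on $\partial K$, and satisfies $\Gamma_g = \Gamma_f - \{(x, L.x) : x \in \mathbb{R}^n\}$, a vertical shearing of the graph. Since such a shearing $\Phi(x,y) = (x, y - L.x)$ is a linear isomorphism of $\mathbb{R}^{n+1}$ mapping $\Gamma_f$ onto $\Gamma_g$, it carries tangent cones to tangent cones; but it does \emph{not} preserve orthogonality, so I would be careful here. The cleaner route is to work directly with the graph of $f$ and the hyperplane $\Gamma_L \subset \mathbb{R}^{n+1}$, and to find an interior point $c$ at which $N_c(f)$ meets the line $(\Gamma_L)^\perp = \mathbb{R}\cdot(-L, 1)/\|(-L,1)\|$ nontrivially; equivalently, the vector $n_0 := (-\alpha_1, \dots, -\alpha_n, 1)$ (where $L.x = \sum \alpha_i x_i$) should lie in $N_c(f)$ for a suitable $c$, up to sign.

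The key step is to locate $c$ using the distance characterization of Proposition \ref{char}. Consider the family of affine hyperplanes $H_t = \{(x,y) : y = L.x + t\}$ parallel to $\Gamma_L$, all with common normal direction $n_0$. On $\partial K$ the graph of $f$ lies exactly in $H_C$. Define $t_{\max} = \max_{x \in K}(f(x) - L.x) + C$ and $t_{\min} = \min_{x \in K}(f(x) - L.x) + C$. If $f(x) = L.x + C$ identically, the generalized Rolle argument already gives $0 \in \widehat{B}_c(f)$ at an interior $c$, so the tangent space is $\Gamma_L$ and $n_0 \perp \Gamma_L$ lies in $N_c(f)$ trivially; otherwise $g = f - L.x$ is non-constant, so by Weierstrass $g$ attains a value strictly larger or strictly smaller than $C$, and (as in the proof of Theorem \ref{rolle}) at least one of these strict extrema is attained at an interior point $c$ of $K$. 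Fix such a $c$; say $g(c) = \max_K g > C$. Then the point $p = (c, f(c)) \in \Gamma_f$ realizes the Euclidean distance from $\Gamma_f$ to the hyperplane $H_{t_{\max}}$: indeed every point $(x,f(x))$ of the graph satisfies $f(x) - L.x \le g(c) + C - C = g(c)$... here I would instead argue that among all points of $\Gamma_f$, $p$ is the one closest to $H_{t_{\max}}$ because the signed distance to $H_{t_{\max}}$ is an increasing affine function of $g(x)$, maximized by $c$. Pick any point $q \in H_{t_{\max}}$ with $q - p \parallel n_0$; then $\|q - p\| = \operatorname{dist}(p, H_{t_{\max}}) \le \|q - p'\|$ for every $p' \in \Gamma_f$, so the distance from $q$ to $\Gamma_f$ is realized at $p$. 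Proposition \ref{char} then yields $q - p \in N_p(\Gamma_f) = N_c(f)$, and $q - p$ is a nonzero multiple of $n_0 \in (\Gamma_L)^\perp \setminus \{0\}$, finishing the proof.

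The main obstacle I anticipate is the interior extremum claim: I must ensure that the strict maximum (or minimum) of $g = f - L.x$ over $K$ is attained at some $c \in \operatorname{int}K$, not merely on $\partial K$. This is exactly the content built into the proof of Theorem \ref{rolle} (where one uses that $g \equiv C$ on $\partial K$ while $g$ is non-constant, hence the global max and the global min of $g$ cannot both occur on the boundary, and the one that differs from $C$ must occur in the interior), so I would invoke that argument verbatim. A secondary point to check is that $N_c(f)$, defined via the Peano cone $C_c(\Gamma_f)$, really does contain $q - p$: this is immediate from Proposition \ref{char} applied to $S = \Gamma_f$, $a = p$, $x = q$, once the distance-realization is established. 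No smoothness of $f$ is used anywhere beyond continuity, and compactness of $K$ is used (as the remark after Theorem \ref{lagrange} warns) to guarantee that Weierstrass applies and the extrema exist.
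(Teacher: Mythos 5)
Your overall strategy --- locate an interior point $c$ where $g = f - L.x$ attains an extremum differing from its boundary value, then apply Proposition \ref{char} to a point displaced from $(c,f(c))$ along the normal direction $n_0=(-\alpha_1,\dots,-\alpha_n,1)$ of $\Gamma_L$ --- is essentially the paper's proof (the paper maximizes $\Vert P_{(\Gamma_L)^{\perp}}((x,f(x)))\Vert$ and tests the point $(c,f(c))+v$ with $v=P_{(\Gamma_L)^{\perp}}((c,f(c)))$, which is the same geometric idea). However, your final step fails as written. Your ``$+C$'' in the definition of $t_{\max}$ is a slip (for $C<0$ the hyperplane $H_{t_{\max}}$ would even cut through the graph); with the natural reading $t_{\max}=\max_K(f(x)-L.x)$, the extremal point $p=(c,f(c))$ lies \emph{on} $H_{t_{\max}}$, since $f(c)=L.c+g(c)$ and $g(c)=t_{\max}$. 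Hence the distance from $p$ to $H_{t_{\max}}$ is $0$, and the only $q\in H_{t_{\max}}$ with $q-p\parallel n_0$ is $q=p$ itself (moving off $p$ along the normal leaves the hyperplane). Your construction therefore produces $q-p=0$, which lies in every normal cone and does not prove the theorem, whose entire content is the existence of a \emph{nonzero} normal vector in $(\Gamma_L)^{\perp}$.

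The repair is one line: take $q=p+s\,n_0/\Vert n_0\Vert$ for any fixed $s>0$ (equivalently, use a hyperplane $H_t$ with $t>\max_K g$ strictly). Then for every $p'=(x,f(x))\in\Gamma_f$ one has $\langle q-p',n_0\rangle/\Vert n_0\Vert = s+(\max_K g-g(x))/\Vert n_0\Vert\geq s=\Vert q-p\Vert$, hence $\Vert q-p'\Vert\geq\Vert q-p\Vert$, the distance from $q$ to $\Gamma_f$ is realized at $p$, and Proposition \ref{char} gives the nonzero vector $q-p\in N_c(f)\cap(\Gamma_L)^{\perp}$. This is exactly what the paper's choice of test point $(c,f(c))+v$ accomplishes, since there $\Vert v\Vert>0$ in the non-degenerate case. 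The remaining ingredients of your argument --- the degenerate affine case, the interior-extremum claim imported from the proof of Theorem \ref{rolle}, and your correct observation that the shearing $(x,y)\mapsto(x,y-L.x)$ does not preserve orthogonality and so cannot be used to reduce to $L=0$ --- are all sound.
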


\begin{proof}
Without a loss of generality, let $C = 0$. We consider the map
$$
g:K \ni x \mapsto \Vert P_{(\Gamma_L)^{\perp}}((x,f(x))) \Vert \in \mathbb{R}_{\geq 0}.
$$
It is continuous as a composition of continuous functions and defined on a compact set, so it reaches its limits. By the assumptions we know that $g \mid_{\partial K} \equiv 0$. If $g \equiv 0$, then $f$ is affine and $N_x(f) = L^{\perp}, \forall x\in int K$. Otherwise we choose a point $c\in int K$ such that $g(c) = \sup_{x\in K} g(x)$ and we consider the vector 
$$
v = P_{(\Gamma_L)^{\perp}} ((c,f(c))).
$$
Then $v$ is perpendicular to $\Gamma_L$ and moreover
$$
\forall x \in K: \Vert P_{(\Gamma_L)^{\perp}}((x,f(x))) \Vert \leq \Vert v \Vert. 
$$
We will show that $v\in N_c(f)$. It is enough to check that the Euclidean distance of $v+(c,f(c))$ from the graph of $f$ is realized at $(c,f(c))$. Let us fix $x\in K\setminus\lbrace c\rbrace$ and notice that
\begin{eqnarray*}
\Vert v+(c,f(c))-(x,f(x)) \Vert &\geq& \Vert P_{(\Gamma_L)^{\perp}}(v+(c,f(c))-(x,f(x))) \Vert\\
&=& \Vert P_{(\Gamma_L)^{\perp}}(v) + P_{(\Gamma_L)^{\perp}}((c,f(c))) - P_{(\Gamma_L)^{\perp}}(x,f(x))) \Vert\\
&=& \Vert 2v-P_{(\Gamma_L)^{\perp}}((x,f(x))) \Vert\\
&\geq& 2\Vert v\Vert - \Vert P_{(\Gamma_L)^{\perp}}((x,f(x))) \Vert \geq \Vert v\Vert.
\end{eqnarray*}
Then by Proposition \ref{char} we have $v\in N_c(f)$, what was to be proven.
\end{proof}

\section{Clarke subdifferential}

In this chapter we will consider the case where the function fulfills the local Lipschitz condition.

\begin{definicja}
\label{lipshitz}
Let $A \subset \mathbb{R}^n$. The function $f: A \rightarrow \mathbb{R}$ is called {\it locally Lipschitz} if for any point $a \in A$ there exist $M>0$, $\varepsilon >0$ such that
$$
\forall x,y \in A:\ (x,y \in K(a,\varepsilon)) \Rightarrow (\vert f(x)-f(y) \vert \leq M\Vert x-y \Vert).
$$
\end{definicja}

All further considerations in this section will be taken for a locally Lipschitz function $f$, defined as in \ref{lipshitz}. The word {\it locally} can be omitted, as the set $A \subset \mathbb{R}^n$ can be narrowed to a properly adjusted neighbourhood of the point $a \in A$.

For the convenience of the reader we will recall several basic notions from \cite{clarke}, first the generalized directional derivative of the function $f$ at the point $a$ in the direction $v \in \mathbb{R}^n$.

\begin{definicja}
Let $f:A \rightarrow \mathbb{R}$ be a Lipschitz function, $A \subset \mathbb{R}^n$. The {\it generalized directional derivative} of the function $f$ at the point $a\in A$ and in the direction $v\in \mathbb{R}^n$ is the limit
$$
f^{\circ}(a;v) = \limsup_{x\rightarrow a, t\rightarrow 0^+} \frac{f(x+tv) - f(x)}{t}.
$$
\end{definicja}

\begin{propozycja}
$f^{\circ}(a;v)$ as a function of a single variable $v$ is bounded, positively homogeneous and subadditive. Moreover,
    $$
    \vert f^{\circ}(a;v) \vert \leq M\Vert v \Vert\ \forall a\in A, v\in \mathbb{R}^n,
    $$
    where $M$ is the Lipschitz constant of $f$.
\end{propozycja}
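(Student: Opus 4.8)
The plan is to derive all four assertions directly from the Lipschitz estimate for the difference quotient. First I would fix $a\in A$ and pick $M>0$, $\varepsilon>0$ as in Definition \ref{lipshitz}, so that for $x$ in a sufficiently small ball around $a$ and for $t>0$ small enough — so that both $x$ and $x+tv$ stay in $K(a,\varepsilon)$ — one has $|f(x+tv)-f(x)|\leq M\Vert tv\Vert = Mt\Vert v\Vert$, hence $\left|\frac{f(x+tv)-f(x)}{t}\right|\leq M\Vert v\Vert$. Passing to the $\limsup$ as $x\rightarrow a$, $t\rightarrow 0^+$ gives at once $|f^{\circ}(a;v)|\leq M\Vert v\Vert$; in particular the $\limsup$ is a finite real number, so $f^{\circ}(a;\cdot)$ is a well-defined real-valued (and locally bounded) function, which is the content of the boundedness claim and of the final estimate.

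Next, positive homogeneity: for $\lambda>0$ I would substitute $s=\lambda t$ in the difference quotient defining $f^{\circ}(a;\lambda v)$. Since $t\rightarrow 0^+$ if and only if $s\rightarrow 0^+$, and
$$\frac{f(x+t\lambda v)-f(x)}{t}=\lambda\cdot\frac{f(x+sv)-f(x)}{s},$$
taking the $\limsup$ and using that multiplication by the positive constant $\lambda$ commutes with $\limsup$ yields $f^{\circ}(a;\lambda v)=\lambda f^{\circ}(a;v)$. The case $\lambda=0$ is immediate since $f^{\circ}(a;0)=0$.

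For subadditivity, fix $v,w\in\mathbb{R}^n$ and split
$$\frac{f(x+t(v+w))-f(x)}{t}=\frac{f\big((x+tw)+tv\big)-f(x+tw)}{t}+\frac{f(x+tw)-f(x)}{t}.$$
Using $\limsup(P+Q)\leq\limsup P+\limsup Q$, the second summand contributes exactly $f^{\circ}(a;w)$. For the first summand I would set $y=x+tw$: as $x\rightarrow a$ and $t\rightarrow 0^+$ we also have $y\rightarrow a$, and the pairs $(y,t)$ that arise this way form a subset of all pairs $(y',t')$ with $y'\rightarrow a$, $t'\rightarrow 0^+$, so the $\limsup$ over the former is no larger than the $\limsup$ over the latter, which is $f^{\circ}(a;v)$. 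Hence $f^{\circ}(a;v+w)\leq f^{\circ}(a;v)+f^{\circ}(a;w)$.

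The only point requiring care — and the one I would regard as the main (mild) obstacle — is the handling of the two-variable $\limsup$: one must make the "$\limsup$ over a smaller index set is no larger" step precise in the subadditivity argument (equivalently, work with sequences $x_k\rightarrow a$, $t_k\rightarrow 0^+$ realizing the $\limsup$ and observe $x_k+t_kw\rightarrow a$), and in all three parts one has to check that for the admissible $(x,t)$ every evaluation point indeed lies in the neighbourhood $K(a,\varepsilon)$ where the Lipschitz inequality is available. Once this bookkeeping is carried out, the proposition follows.
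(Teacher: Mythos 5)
Your argument is correct and complete: the Lipschitz bound on the difference quotient gives the estimate $\vert f^{\circ}(a;v)\vert\leq M\Vert v\Vert$, the substitution $s=\lambda t$ gives positive homogeneity, and the splitting of $f(x+t(v+w))-f(x)$ through the intermediate point $x+tw$ together with the observation that $x+tw\rightarrow a$ gives subadditivity. Note that the paper itself offers no proof of this proposition --- it is recalled as a known fact from \cite{clarke} --- and your argument is precisely the classical one found there, including the correct handling of the two-variable $\limsup$ and the remark that ``bounded'' here means finite-valued with the linear growth bound $M\Vert v\Vert$.
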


We can apply the Hahn-Banach Theorem for the function $f^{\circ}(a;v)$, finding a~linear functional $\xi : \mathbb{R}^n \rightarrow \mathbb{R}$ such that
$$
f^{\circ}(a;v) \geq \xi(v), \forall a\in A, v\in \mathbb{R}^n.
$$

\begin{definicja}
The set
$
\partial f(a) = \lbrace \xi \in \mathbb{R}^n: f^{\circ}(a;v) \geq \langle \xi,v \rangle\ \forall v\in \mathbb{R}^n \rbrace
$
is called the {\it generalized gradient} of $f$ at $a\in A$.
\end{definicja}

Now let our attention be paid to the basic properties of the generalized gradient, given in \cite{clarke}.

\begin{propozycja}
\label{prop1}
For any $\lambda \in \mathbb{R}$ we have $\partial(\lambda f)(a) = \lambda \partial f(a)$.
\end{propozycja}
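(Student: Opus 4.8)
The statement to prove is Proposition \ref{prop1}: for any $\lambda \in \mathbb{R}$, $\partial(\lambda f)(a) = \lambda \partial f(a)$.

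The plan is to reduce everything to the behaviour of the generalized directional derivative under scaling, since the generalized gradient is defined purely in terms of $f^{\circ}(a;v)$. First I would establish the scaling rule for $f^{\circ}$: for $\lambda \geq 0$ one has $(\lambda f)^{\circ}(a;v) = \lambda f^{\circ}(a;v)$ directly from the definition, because $\frac{\lambda f(x+tv) - \lambda f(x)}{t} = \lambda \cdot \frac{f(x+tv)-f(x)}{t}$ and the $\limsup$ of a nonnegative multiple is that multiple of the $\limsup$. For $\lambda < 0$, the multiplication by a negative constant turns the $\limsup$ into a $\liminf$, so I would note that $(\lambda f)^{\circ}(a;v) = \lambda \liminf_{x\to a, t\to 0^+}\frac{f(x+tv)-f(x)}{t}$, and then recognize this $\liminf$ as $-\,f^{\circ}(a;-v)$ after the substitution $v \mapsto -v$ together with a reparametrization $x \mapsto x + tv$ (which is legitimate since $x \to a$ as $x+tv \to a$). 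Hence $(\lambda f)^{\circ}(a;v) = -\lambda f^{\circ}(a;-v) = |\lambda| f^{\circ}(a;-v)$ for $\lambda < 0$.

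Next I would push this through the definition of the generalized gradient. For $\lambda > 0$: $\xi \in \partial(\lambda f)(a)$ iff $\lambda f^{\circ}(a;v) \geq \langle \xi, v\rangle$ for all $v$, iff $f^{\circ}(a;v) \geq \langle \lambda^{-1}\xi, v\rangle$ for all $v$, iff $\lambda^{-1}\xi \in \partial f(a)$, iff $\xi \in \lambda\partial f(a)$. The case $\lambda = 0$ is immediate: $\partial(0)(a) = \{\xi : 0 \geq \langle \xi, v\rangle\ \forall v\} = \{0\} = 0\cdot\partial f(a)$, using that $\partial f(a)$ is nonempty (which follows from the Hahn--Banach construction recalled just before the definition). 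For $\lambda < 0$: $\xi \in \partial(\lambda f)(a)$ iff $|\lambda| f^{\circ}(a;-v) \geq \langle \xi, v\rangle$ for all $v$; substituting $w = -v$ this reads $|\lambda| f^{\circ}(a;w) \geq \langle -\xi, w\rangle = \langle \xi, -w\rangle$ for all $w$, i.e. $-\xi \in |\lambda|\partial f(a)$, i.e. $\xi \in -|\lambda|\partial f(a) = \lambda\partial f(a)$. Combining the three cases gives the claim.

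The main obstacle, modest as it is, will be the $\lambda < 0$ bookkeeping: one must be careful that replacing $\limsup$ by $-\liminf(-\,\cdot\,)$ and then identifying the resulting $\liminf$ with a value of $f^{\circ}$ at the reflected direction is done with the correct change of variables, so that the two $\limsup$'s genuinely range over the same sets. Everything else is a direct unravelling of definitions, so I would keep the proof short, perhaps only spelling out the scaling identity for $f^{\circ}$ and then stating that the gradient identity follows immediately by the defining inequality.
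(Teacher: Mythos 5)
Your argument is correct: the paper itself states this proposition without proof (quoting it as a known fact from Clarke's book), and your reduction to the scaling identity for $f^{\circ}$ --- in particular the key observation that $(-f)^{\circ}(a;v)=f^{\circ}(a;-v)$, justified by converting the $\limsup$ into a $\liminf$ and performing the change of variables $x\mapsto x+tv$ --- is exactly the standard proof. Your handling of the three cases $\lambda>0$, $\lambda=0$ (where you rightly invoke nonemptiness of $\partial f(a)$) and $\lambda<0$ is complete, so nothing is missing.
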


\begin{propozycja}
\label{prop2}
If the function $f$ reaches a local extreme at $a$, then $0 \in \partial f(a)$.
\end{propozycja}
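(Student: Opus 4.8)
The plan is to reduce to the case of a local minimum and then extract the needed nonnegativity directly from the definition of $f^{\circ}(a;\cdot)$ by freezing the base point at $a$.

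First I would note that it suffices to treat a local minimum. If $f$ attains a local maximum at $a$, then $-f$ attains a local minimum there; once $0\in\partial(-f)(a)$ is established, Proposition \ref{prop1} with $\lambda=-1$ gives $\partial(-f)(a)=-\partial f(a)$, whence $0\in\partial f(a)$ as well. So assume from now on that $f$ has a local minimum at $a$. To prove $0\in\partial f(a)$ it is enough, by the definition of the generalized gradient, to check that $f^{\circ}(a;v)\geq\langle 0,v\rangle=0$ for every $v\in\mathbb{R}^n$.

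Fix $v\in\mathbb{R}^n$. In the $\limsup$ defining $f^{\circ}(a;v)$ the base point $x$ may range over all sequences converging to $a$; in particular the constant choice $x\equiv a$ is an admissible way of approaching, so for any sequence $t_k\to 0^+$ one has $\limsup_k \frac{f(a+t_kv)-f(a)}{t_k}\le f^{\circ}(a;v)$, and therefore
$$
\limsup_{t\to 0^+}\frac{f(a+tv)-f(a)}{t}\ \le\ f^{\circ}(a;v).
$$
On the other hand, since $a$ is a local minimizer, $f(a+tv)\ge f(a)$ for all sufficiently small $t>0$, so the difference quotient $\frac{f(a+tv)-f(a)}{t}$ is nonnegative for such $t$, and hence its $\limsup$ as $t\to 0^+$ is $\ge 0$. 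Combining the two displayed inequalities gives $f^{\circ}(a;v)\ge 0$, and since $v$ was arbitrary we conclude $0\in\partial f(a)$.

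I do not expect a genuine obstacle here: the only point requiring a word of justification is the first inequality, namely that restricting the base point to the single value $a$ cannot increase the generalized directional derivative, which is just monotonicity of $\limsup$ under passing to a sub-collection of admissible approaches. The Lipschitz hypothesis enters only in the background, ensuring that $f^{\circ}(a;v)$ and $\partial f(a)$ are well defined and finite.
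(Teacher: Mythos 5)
Your proof is correct; the paper itself states this proposition without proof, citing Clarke, and your argument --- reducing to a local minimum via Proposition \ref{prop1} with $\lambda=-1$, then bounding $f^{\circ}(a;v)$ from below by the one-sided difference quotient with the base point frozen at $x\equiv a$ --- is precisely the standard one from that reference. Nothing further is needed.
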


\begin{propozycja}
\label{prop3}
For lipschitz functions $f_i: A_i \rightarrow \mathbb{R}$, $i= 1,\dots,n$, $A_i \subset \mathbb{R}^n$, $a\in \bigcap_{i=1}^n A_i$ we have
$$
\partial \left( \sum_{i=1}^n f_i \right)(a) \subset \sum_{i=1}^n \partial f_i(a),
$$
where the set on the right hand side consists of elements of the form 
$$
 \sum_{i=1}^n \xi_i: \xi_i \in \partial f_i,\ i=1,\dots,n .
$$
\end{propozycja}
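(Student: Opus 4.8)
\emph{Proof plan.} The plan is to reduce to two summands and then exploit the duality between sublinear functions and compact convex sets. By an immediate induction on $n$ it suffices to prove the inclusion $\partial(f_1+f_2)(a)\subset\partial f_1(a)+\partial f_2(a)$; moreover, by the remark following Definition \ref{lipshitz} we may replace all the $A_i$ by a single common neighbourhood of $a$ on which every $f_i$ is (honestly) Lipschitz.

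First I would record the subadditivity of the generalized directional derivative: for every $v\in\mathbb{R}^n$,
$$
(f_1+f_2)^{\circ}(a;v) = \limsup_{x\rightarrow a,\, t\rightarrow 0^+}\frac{(f_1+f_2)(x+tv)-(f_1+f_2)(x)}{t} \leq f_1^{\circ}(a;v)+f_2^{\circ}(a;v),
$$
which follows from $\limsup(\varphi+\psi)\leq\limsup\varphi+\limsup\psi$ applied to the difference quotients of $f_1$ and $f_2$ taken along one and the same approaching pair $(x,t)$.

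Next I would invoke the support-function picture. By the proposition preceding the definition of the generalized gradient, each $f_i^{\circ}(a;\cdot)$ is positively homogeneous, subadditive, and bounded by $M\Vert\cdot\Vert$; hence the Hahn--Banach argument already indicated in the text (extending, for a fixed direction $v_0$, the functional $tv_0\mapsto tf_i^{\circ}(a;v_0)$ from the line $\mathbb{R}v_0$ to all of $\mathbb{R}^n$, dominated by $f_i^{\circ}(a;\cdot)$) shows that $\partial f_i(a)$ is nonempty, convex and compact, and that
$$
f_i^{\circ}(a;v) = \max_{\xi\in\partial f_i(a)}\langle\xi,v\rangle\qquad\forall v\in\mathbb{R}^n,
$$
i.e. $f_i^{\circ}(a;\cdot)$ is exactly the support function of $\partial f_i(a)$. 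Consequently the Minkowski sum $\partial f_1(a)+\partial f_2(a)$ is again convex and compact and its support function is $f_1^{\circ}(a;\cdot)+f_2^{\circ}(a;\cdot)$. Now take $\xi\in\partial(f_1+f_2)(a)$; then for all $v$ one has $\langle\xi,v\rangle\leq(f_1+f_2)^{\circ}(a;v)\leq f_1^{\circ}(a;v)+f_2^{\circ}(a;v)$, and since a point lies in a closed convex set precisely when the linear functional it induces is dominated by the support function of that set, it follows that $\xi\in\partial f_1(a)+\partial f_2(a)$, which has the asserted form by definition of the Minkowski sum.

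\emph{Main obstacle.} I expect the crux to be the third step: establishing that $f_i^{\circ}(a;\cdot)$ is not merely an upper bound for, but genuinely equals, the support function of $\partial f_i(a)$ (equivalently, that $\partial f_i(a)$ is nonempty and compact and the supremum over it is attained for every direction). Everything else — the reduction to $n=2$, the $\limsup$ inequality, and the final point/set separation — is routine once this is in place. One could instead cite this fact directly from \cite{clarke}, but I would prefer to include the short Hahn--Banach argument for completeness.
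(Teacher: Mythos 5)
The paper itself offers no proof of this proposition: it is listed among the basic properties of the generalized gradient imported from \cite{clarke}, so there is nothing in the text to compare your argument against line by line. Your proof is correct and is essentially the standard one (it is Clarke's own argument): reduce to two summands by induction, get $(f_1+f_2)^{\circ}(a;v)\leq f_1^{\circ}(a;v)+f_2^{\circ}(a;v)$ from subadditivity of $\limsup$ along a common pair $(x,t)$, identify each $f_i^{\circ}(a;\cdot)$ as the support function of the nonempty, convex, compact set $\partial f_i(a)$, and finish with the biduality between closed convex sets and their support functions. You have also correctly located the crux, namely the attained-maximum formula $f_i^{\circ}(a;v)=\max_{\xi\in\partial f_i(a)}\langle\xi,v\rangle$. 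Two small points should be made explicit when you write the Hahn--Banach step out: first, domination of the functional $tv_0\mapsto t f_i^{\circ}(a;v_0)$ on the whole line $\mathbb{R}v_0$ requires checking the case $t<0$, i.e. $-f_i^{\circ}(a;v_0)\leq f_i^{\circ}(a;-v_0)$, which follows from subadditivity together with $f_i^{\circ}(a;0)=0$; second, the final separation argument needs $\partial f_1(a)+\partial f_2(a)$ to be closed, which you obtain from compactness of each summand (boundedness coming from $\vert f_i^{\circ}(a;v)\vert\leq M\Vert v\Vert$). With those details filled in, the proof is complete and would serve as a self-contained justification of a statement the paper only cites.
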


We can see how the Mean Value Theorem can be formulated for a Lipschitz function using the generalized gradient. In \cite{clarke} we can find the following fact, known as the Lebourg Theorem.

\begin{twierdzenie}[\sc Lebourg]
Let $U \subset \mathbb{R}^n$ be an open set and let $x,y \in U$ be such that $[x,y] \subset U$. Then for a Lipschitz function $f:U \rightarrow \mathbb{R}$ there exists a point $c\in (x,y)$ such that $f(y)-f(x) \in \langle \partial f(c), y-x \rangle.$
\end{twierdzenie}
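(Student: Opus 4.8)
The plan is to reduce the statement to a one-dimensional extremum problem along the segment $[x,y]$ and then push the conclusion through the calculus rules for the generalized gradient recalled above. If $x=y$ there is nothing to prove, so assume $x\neq y$ and set $\gamma:[0,1]\ni t\mapsto x+t(y-x)\in U$, which is well defined since $[x,y]\subset U$. Consider the auxiliary function
$$
g:[0,1]\ni t\mapsto f(\gamma(t))+t\bigl(f(x)-f(y)\bigr)\in\mathbb{R}.
$$
Since $\gamma$ is affine and $f$ is Lipschitz on a neighbourhood of the compact set $[x,y]$, the function $g$ is Lipschitz on $[0,1]$, and a direct check gives $g(0)=f(x)=g(1)$. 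Being continuous on a compact interval, $g$ attains its infimum and its supremum; since the two endpoint values coincide, if $g$ is not constant then at least one of these extrema is attained at an interior point, and if $g$ is constant any interior point will do. In either case we obtain $t_0\in(0,1)$ at which $g$ has a local extremum; put $c:=\gamma(t_0)\in(x,y)$.

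Next I would apply Proposition \ref{prop2} to $g$ at $t_0$, obtaining $0\in\partial g(t_0)$, and then unwind $\partial g(t_0)$. The linear summand $t\mapsto t(f(x)-f(y))$ is smooth with derivative $f(x)-f(y)$, so Propositions \ref{prop1} and \ref{prop3} yield $0\in\partial g(t_0)\subset\partial(f\circ\gamma)(t_0)+\{f(x)-f(y)\}$, hence $f(y)-f(x)\in\partial(f\circ\gamma)(t_0)$.

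The remaining, and I expect the only genuinely delicate, point is an affine chain rule: $\partial(f\circ\gamma)(t_0)\subset\langle\partial f(c),\,y-x\rangle$. I would establish it straight from the definitions. For $s\in\mathbb{R}$,
$$
(f\circ\gamma)^{\circ}(t_0;s)=\limsup_{\tau\to t_0,\ \lambda\to0^+}\frac{f(\gamma(\tau)+\lambda s(y-x))-f(\gamma(\tau))}{\lambda}\le f^{\circ}(c;s(y-x)),
$$
because $\gamma(\tau)\to c$ as $\tau\to t_0$, and the $\limsup$ defining $f^{\circ}(c;\cdot)$ already ranges over all base points tending to $c$. Consequently every $\zeta\in\partial(f\circ\gamma)(t_0)$ satisfies $\zeta s\le f^{\circ}(c;s(y-x))$ for all $s\in\mathbb{R}$; defining a linear functional on the line $\mathbb{R}(y-x)$ by $s(y-x)\mapsto\zeta s$ and invoking the Hahn--Banach theorem (exactly as in the construction of $\partial f(c)$ above, but now extending a functional already defined on a subspace) produces $\xi\in\partial f(c)$ with $\zeta=\langle\xi,y-x\rangle$. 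Applying this to $\zeta=f(y)-f(x)$ gives $\xi\in\partial f(c)$ with $f(y)-f(x)=\langle\xi,y-x\rangle$, which is the claim.

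Thus the bookkeeping (the auxiliary function $g$, the extremum argument, and the sum and scaling rules) is routine; the main obstacle is the chain-rule step, since Proposition \ref{prop3} only covers sums of functions on a common space and does not handle the composition with $\gamma$. So both the comparison of generalized directional derivatives and the Hahn--Banach extension from the subspace $\mathbb{R}(y-x)$ to all of $\mathbb{R}^n$ must be carried out by hand rather than quoted.
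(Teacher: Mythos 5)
The paper does not prove this theorem at all: it is quoted as a known fact from Clarke's book, so there is no in-paper argument to compare against. Your proposal is correct and is essentially the classical proof from that source: the auxiliary function $g(t)=f(x+t(y-x))+t\bigl(f(x)-f(y)\bigr)$ with $g(0)=g(1)$, an interior local extremum $t_0$, Proposition \ref{prop2} to get $0\in\partial g(t_0)$, the sum rule of Proposition \ref{prop3} to isolate $f(y)-f(x)\in\partial(f\circ\gamma)(t_0)$, and then the affine chain rule. You are also right that the chain rule is the only step not covered by the quoted propositions, and your treatment of it is sound: the inequality $(f\circ\gamma)^{\circ}(t_0;s)\leq f^{\circ}(c;s(y-x))$ holds because the base points $\gamma(\tau)$ form a subfamily of all points tending to $c$, and the Hahn--Banach extension from the line $\mathbb{R}(y-x)$ is legitimate since $f^{\circ}(c;\cdot)$ is positively homogeneous and subadditive by the proposition recalled in the paper. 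The only cosmetic point worth tightening is that $g$ should be regarded as defined on a slightly larger open interval (possible since $U$ is open and $[x,y]$ is compact) so that the generalized gradient at the interior extremum $t_0$ is taken at an interior point of the domain.
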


There are several other notions of a subdifferential. In some sources (see e.g. \cite{praca} or \cite{r-w}) the following can be found:

\begin{definicja}
Let $U$ be an open and nonempty subset of $\mathbb{R}^n$ and let $f:U\rightarrow\mathbb{R}$ be a locally lipschitz function. For $x\in U$ we define:

\begin{enumerate}
    \item The \textit{Fr\'{e}chet subdifferential} $\widehat{\partial}f(x)$ of $f$ at $x$ as
$$
\widehat{\partial}f(x) = \left\lbrace v\in\mathbb{R}^n: \liminf_{y\rightarrow x,y\neq x} \frac{f(y)-f(x)-\langle v,y-x\rangle}{\Vert y-x\Vert}\geq 0\right\rbrace,
$$

\item The \textit{limit subdifferential} $\partial' f(x)$ of $f$ at $x$ as
\begin{equation*}
\begin{split}
v\in\partial' f(x)\Leftrightarrow\ & \exists \lbrace x_n \rbrace_{n=1}^{+\infty} \subset U,\lbrace v_n \rbrace_{n=1}^{+\infty} \subset \mathbb{R}^n:\\
& x_n\rightarrow x,v_n\rightarrow v\ (n\rightarrow +\infty),v_n\in\widehat{\partial}f(x_n), \forall n\in\mathbb{N},
\end{split}
\end{equation*}

\item The \textit{Clarke subdifferential} $\partial^{\circ} f(x)$ of $f$ at $x$ as the convex hull of $\partial' f(x)$ ($\partial^{\circ} f(x) = cvx\ \partial' f(x)$).

\end{enumerate}
\end{definicja}

It follows immediately that $\widehat{\partial}f(x) \subset \partial' f(x) \subset \partial^{\circ} f(x)$. 

In \cite{r-w}, the links between all the different notions of subdifferentials are clearly explained. In particular, it is well-known that $\partial f(x) = \partial^{\circ} f(x)$ for $f$ being a locally Lipschitz function.

\begin{uwaga}
\label{uw}
The Fr\'{e}chet subdifferential has a convenient geometric interpretation. For $x \in U$ consider the tangent cone $C_x(f)$, taking into account all sequences $\lbrace x_n \rbrace_{n=1}^{+\infty} \subset U$ convergent to $x$ such that
$$
\exists (v,w) \in C_x(f): \frac{x_n-x}{\Vert x_n-x \Vert} \rightarrow v,\ \frac{f(x_n)-f(x)}{\Vert x_n-x \Vert} \rightarrow w\ (n \rightarrow +\infty)
$$
and $x_n \neq x\ \forall n \in \mathbb{Z}_+$. Then $\widehat{\partial}f(x)$ can be viewed as the set of vectors $\xi \in \mathbb{R}^n$ such that for any sequence $\lbrace x_n \rbrace_{n=1}^{+\infty}$ we have
$$
\lim_{n \rightarrow +\infty} \frac{f(x_n)-f(x)-\langle \xi, x_n-x\rangle}{\Vert x_n-x\Vert}\geq 0
$$
or $w \geq \langle \xi,v \rangle, \forall (v,w) \in C_x(f)$.
\end{uwaga}

Now we shall see the connection between the Clarke subdifferential and the BTC subdifferential considered in the first section.

\begin{twierdzenie}
\label{inkluzja}
Let $U$ be open in $\mathbb{R}^n$, $a\in U$ and let $f:U\rightarrow\mathbb{R}$ be a locally lipschitz function. Then we have $\partial^{\circ}f(a) = \widehat{B}_a(f).$
\end{twierdzenie}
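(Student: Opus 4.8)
The plan is to identify both $\partial^{\circ}f(a)$ and $\widehat{B}_a(f)$ with one and the same subset of $\mathbb{R}^n$ described through Clarke's generalized directional derivative. Recall from the discussion above that, for a locally Lipschitz $f$, one has $\partial f(a)=\partial^{\circ}f(a)$, so that
$$
\partial^{\circ}f(a)=\{\xi\in\mathbb{R}^n:\ \langle\xi,v\rangle\le f^{\circ}(a;v)\ \text{for all }v\in\mathbb{R}^n\}.
$$
Introducing the lower generalized derivative $f_{\circ}(a;v):=\liminf_{x\to a,\,t\to0^+}\frac{f(x+tv)-f(x)}{t}$, the change of variable $y=x+tv$ in the defining expression shows $f_{\circ}(a;v)=-f^{\circ}(a;-v)$; hence requiring $\langle\xi,v\rangle\le f^{\circ}(a;v)$ for all $v$ is equivalent to requiring $f_{\circ}(a;v)\le\langle\xi,v\rangle\le f^{\circ}(a;v)$ for all $v$. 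Since, by definition, $\alpha\in\widehat{B}_a(f)$ means that $(v,\langle\alpha,v\rangle)\in B_{(a,f(a))}(\Gamma_f)$ for every $v\in\mathbb{R}^n$, it therefore suffices to prove the identity
$$
B_{(a,f(a))}(\Gamma_f)=\{(v,w)\in\mathbb{R}^n\times\mathbb{R}:\ f_{\circ}(a;v)\le w\le f^{\circ}(a;v)\}.
$$

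For the inclusion ``$\subset$'' let $(v,w)\in B_{(a,f(a))}(\Gamma_f)$ be witnessed by a bisequence $(a_n,b_n)$ and positive scalars $t_n$. If $v=0$, the Lipschitz bound $|t_n(f(a_n)-f(b_n))|\le M t_n\|a_n-b_n\|\to M\|v\|=0$ forces $w=0=f^{\circ}(a;0)$. If $v\ne0$, then for large $n$ we have $a_n\ne b_n$; put $s_n=\|a_n-b_n\|\to0^+$ and $u_n=(a_n-b_n)/s_n$, so $t_ns_n\to\|v\|$ and $u_n\to u:=v/\|v\|$. Since $t_ns_n\to\|v\|\ne0$ we get $\frac{f(b_n+s_nu_n)-f(b_n)}{s_n}\to w/\|v\|$, and the Lipschitz estimate $\big|\frac{f(b_n+s_nu_n)-f(b_n)}{s_n}-\frac{f(b_n+s_nu)-f(b_n)}{s_n}\big|\le M\|u_n-u\|\to0$ allows us to replace $u_n$ by $u$. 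Thus $w/\|v\|$ is a limit of difference quotients with base point $b_n\to a$ and step $s_n\to0^+$, whence $f_{\circ}(a;u)\le w/\|v\|\le f^{\circ}(a;u)$; positive homogeneity of $f^{\circ}(a;\cdot)$ (and hence of $f_{\circ}(a;\cdot)$) gives $f_{\circ}(a;v)\le w\le f^{\circ}(a;v)$.

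For ``$\supset$'' fix $(v,w)$ with $f_{\circ}(a;v)\le w\le f^{\circ}(a;v)$. If $v=0$ then $w=0$ and $(0,0)\in B_{(a,f(a))}(\Gamma_f)$ (always $0\in B_x(X)$). If $v\ne0$, set $u=v/\|v\|$ and consider the continuous map $g(x,t)=\frac{f(x+tu)-f(x)}{t}$ on the sets $(B(a,\varepsilon)\cap U)\times(0,\varepsilon)$, which are connected (for $\varepsilon$ small, $B(a,\varepsilon)\subset U$). Their images under $g$ are connected, and the cluster set of $g$ as $(x,t)\to(a,0^+)$ is the nested intersection of the closures of these images; being an intersection of nested compact connected subsets of $\mathbb{R}$ (compactness from $|g|\le M$), it is a closed interval whose endpoints are $f_{\circ}(a;u)=\liminf g$ and $f^{\circ}(a;u)=\limsup g$. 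Hence there exist $x_n\to a$, $t_n\to0^+$ with $g(x_n,t_n)\to w/\|v\|$; taking $b_n=x_n$, $a_n=x_n+t_nu$ and the scalars $\|v\|/t_n$ yields a bisequence proving $(v,w)\in B_{(a,f(a))}(\Gamma_f)$. Together with the opening paragraph this establishes $\widehat{B}_a(f)=\partial^{\circ}f(a)$.

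I expect the delicate point to be the ``$\supset$'' direction of the displayed identity — namely that \emph{every} value between $f_{\circ}(a;u)$ and $f^{\circ}(a;u)$ is genuinely realized as a limit of difference quotients. This is where the connectedness of the cluster set of the difference-quotient map is used, and one must make sure the neighbourhoods entering the argument are connected and that the interval's endpoints are exactly the liminf and limsup. The remaining ingredients — the two Lipschitz estimates, positive homogeneity of $f^{\circ}(a;\cdot)$, and the elementary change of variable relating $f_{\circ}$ and $f^{\circ}$ — are routine.
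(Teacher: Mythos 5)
Your proof is correct, but it takes a genuinely different route from the paper's. The paper argues constructively in three stages along the chain $\widehat{\partial}f\subset\partial'f\subset\partial^{\circ}f=\mathrm{cvx}\,\partial'f$: for $v\in\widehat{\partial}f(0)$ it builds a bisequence with the prescribed secant slope by an intermediate value argument along the segment $[-x,x]$, then passes to $\partial'f$ by a diagonal extraction, then to the convex hull by interpolating continuously between two bisequences; the converse inclusion is proved separately by showing $\langle v,x_0\rangle\le f^{\circ}(a;x_0)$ via the Lipschitz replacement of $\frac{a_n-b_n}{\Vert a_n-b_n\Vert}$ by $x_0$ (essentially your ``$\subset$'' computation). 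You instead prove the stronger, self-contained identity
$$
B_{(a,f(a))}(\Gamma_f)=\lbrace (v,w):\ f_{\circ}(a;v)\le w\le f^{\circ}(a;v)\rbrace,
$$
i.e.\ a complete description of the paratingent cone of the graph, and deduce the theorem from the identification $\partial f(a)=\partial^{\circ}f(a)$ together with the elementary duality $f_{\circ}(a;v)=-f^{\circ}(a;-v)$. The key novelty is your connectedness argument: the cluster set of the difference-quotient map is a nested intersection of compact connected subsets of $\mathbb{R}$ containing its $\liminf$ and $\limsup$, hence the whole interval between them is attained. This is shorter than the paper's three-step construction, replaces all three intermediate value arguments by one topological fact, and yields more (the exact shape of the BTC, in particular the absence of vertical directions for Lipschitz $f$); the price is that you lean on the nontrivial equality of Clarke's generalized gradient with the convexified limiting subdifferential in both directions, whereas the paper needs it only for the converse --- but since the paper itself declares that equality as known, this is legitimate. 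Two small points you should make explicit: on the domain $(\mathbb{B}(a,\varepsilon)\cap U)\times(0,\varepsilon)$ you also need $x+tu\in U$, which requires taking $\varepsilon$ with $\mathbb{B}(a,2\varepsilon)\subset U$; and the claim that the $\liminf$ and $\limsup$ actually belong to the cluster set deserves the one-line justification that $\sup g(V_\varepsilon)$ is attained on the compact closure $\overline{g(V_\varepsilon)}$ and these maxima decrease to a point lying in every $\overline{g(V_{\varepsilon_0})}$.
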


\begin{proof}
Without loss of generality we may assume that $a = f(a) = 0$. Let us fix $v \in \partial^{\circ}f(a)$. First we will prove that
$$
(x, \langle v,x \rangle) \in B_a(f),\ \forall x \in \mathbb{R}^n,
$$
what will be done in three steps.

{\sc Step 1.} $v \in \widehat{\partial} f(0)$. 

Fix $x\in\mathbb{R}^n$, $x \neq 0$. Let $\Gamma_x$ denote the cross-section of the graph of the function above the segment $[0,x]$. We choose the smallest numbers $y_1,y_2\in\mathbb{R}$ such that
$$
(-x,y_1),(x,y_2)\in C_0(\Gamma_x) \subset C_0(f),
$$
which surely exist because of the Lipschitz condition.

If $y_1<-y_2$, then, by \ref{uw}, $\widehat{\partial}f(0) = \emptyset$ and there is nothing to prove. Assume that $y_1 \geq -y_2$. Then we find sequences $\lbrace a_n \rbrace_{n=1}^{+\infty} \subset (0,-x]$, $\lbrace b_n \rbrace_{n=1}^{+\infty} \subset (0,x]$ satisfying
$$
a_n\rightarrow 0,\ b_n\rightarrow 0\ (n\rightarrow +\infty),
$$
$$
\left\vert \frac{f(a_n)}{\Vert a_n\Vert} - \frac{y_1}{\Vert x\Vert} \right\vert \leq\frac{1}{2^n},\ \left\vert \frac{f(b_n)}{\Vert b_n\Vert} - \frac{y_2}{\Vert x\Vert}\right\vert \leq\frac{1}{2^n}, \forall n\in\mathbb{N}.
$$
Without loss of generality assume that $\vert a_n\vert,\vert b_n\vert<\frac{1}{2^n}$ for $n\in\mathbb{N}$, choosing proper subsequences if needed. Now we wish to construct a bisequence $\lbrace (c_n,d_n) \rbrace_{n=1}^{+\infty} \subset U^2$ such that
$$
c_n,d_n\rightarrow 0,\ \frac{f(c_n)-f(d_n)}{\Vert c_n-d_n \Vert}\rightarrow \frac{\langle v,x \rangle}{\Vert x\Vert}\ (n\rightarrow +\infty).
$$
Fix $N\in\mathbb{Z}_+$. If
$$
\left\vert \frac{f(a_N)-f(b_N)}{\Vert a_N-b_N \Vert} - \frac{\langle v,x \rangle}{\Vert x\Vert} \right\vert \leq\frac{1}{2^N},
$$
then we take $c_N=a_N, d_N=b_N$. Otherwise, if
$$
\frac{f(a_N)-f(b_N)}{\Vert a_N-b_N \Vert}< \frac{\langle v,x \rangle}{\Vert x\Vert} - \frac{1}{2^N},
$$ 
then we fix $c_N=a_N$ and consider the map
$$
g:U\ni y\mapsto \frac{f(a_N)-f(y)}{\Vert a_N-y \Vert}\in\mathbb{R}.
$$
It is continuous as a composition of continuous functions and furthermore 
$$
g(b_N)< \frac{\langle v,x \rangle}{\Vert x\Vert} -\frac{1}{2^N},\ g(0)\geq \frac{y_2}{\Vert x\Vert} -\frac{1}{2^N}\geq \frac{\langle v,x \rangle}{\Vert x\Vert} -\frac{1}{2^N}.
$$
So we can find a mean point $\xi\in[0,b_N]$ such that
$$
g(\xi)= \frac{\langle v,x \rangle}{\Vert x\Vert}-\frac{1}{2^N}
$$
and we choose $d_N=\xi$. If
$$
\frac{f(a_N)-f(b_N)}{\Vert a_N-b_N \Vert}> \frac{\langle v,x \rangle}{\Vert x\Vert}+\frac{1}{2^N},
$$
then we set $d_N=b_N$ and analogically we find a mean point $c_N\in[a_N,0]$ for which
$$
\frac{f(c_N)-f(d_N)}{\Vert c_N-d_N \Vert}= \frac{\langle v,x \rangle}{\Vert x\Vert}+\frac{1}{2^N}.
$$
It can be easily observed that this bisequence fulfills the desired conditions. Moreover, all segments $[c_i,d_i]$, $i\in\mathbb{Z}_+$ are parallel to the segment $[0,x]$.

{\sc Step 2.} $v\in\partial' f(0)$. 

Let $\lbrace x_n \rbrace_{n=1}^{+\infty} \subset U$ and $\lbrace v_n \rbrace_{n=1}^{+\infty} \subset \mathbb{R}^n$ be sequences such that:
$$
x_n\rightarrow 0\ (n\rightarrow +\infty),\ v_n\in\widehat{\partial}f(x_n)\ \forall n\in\mathbb{N},\ \Vert v_n-v\Vert \leq\frac{1}{2^n}, \forall n\in\mathbb{N}.
$$
Having fixed a point $x\in\mathbb{R}^n$, for all pairs $(x_i,v_i)$, $i\in\mathbb{Z}_+$ we construct bisequences $\lbrace (c_k^{(i)},d_k^{(i)}) \rbrace_{k=1}^{+\infty} \subset U^2$ as in the first step. Then for any $n\in\mathbb{N}$ we have
\begin{eqnarray*}
\left\vert \frac{f(c_n^{(n)})-f(d_n^{(n)})}{\Vert c_n^{(n)}-d_n^{(n)} \Vert} - \frac{\langle v,x \rangle}{\Vert x\Vert} \right\vert &\leq & \left\vert \frac{f(c_n^{(n)})-f(d_n^{(n)})}{\Vert c_n^{(n)}-d_n^{(n)} \Vert} - \frac{\langle v_n,x \rangle}{\Vert x\Vert} \right\vert + \left\vert \frac{\langle v_n,x \rangle}{\Vert x\Vert} - \frac{\langle v,x \rangle}{\Vert x\Vert} \right\vert\\
&\leq & \frac{1}{2^n}+\frac{1}{2^n} = \frac{1}{2^{n-1}}.
\end{eqnarray*}
Hence for the bisequence $\lbrace (c_n^{(n)},d_n^{(n)}) \rbrace_{n=1}^{+\infty}$ we have
$$
\frac{f(c_n^{(n)})-f(d_n^{(n)})}{\Vert c_n^{(n)}-d_n^{(n)} \Vert}\rightarrow \frac{\langle v,x \rangle}{\Vert x\Vert}\ (n\rightarrow +\infty),
$$
and $c_n^{(n)},d_n^{(n)}\rightarrow 0\ (n\rightarrow +\infty)$. Moreover, each of the segments $[c_i^{(i)},d_i^{(i)}]$, $i\in\mathbb{Z}_+$ is parallel to the segment $[0,x]$.

{\sc Step 3.} $v\in\partial^{\circ} f(0)$. 

If the set $\partial' f(0)$ is convex, then there is nothing to prove. Otherwise we choose a vector $v\in \partial^{\circ} f(0) \setminus \partial' f(0)$ and fix $v_1,v_2\in \partial' f(0)$ such that $v= T v_1+(1-T) v_2$ for some $T\in (0,1)$. Analogically as in the second step for a given $x\in\mathbb{R}^n$ we may construct bisequences $\lbrace (a_n,b_n) \rbrace_{n=1}^{+\infty}$, $\lbrace (c_n,d_n) \rbrace_{n=1}^{+\infty} \subset U^2$ such that
$$
a_n,b_n,c_n,d_n\rightarrow 0,\ \frac{f(a_n)-f(b_n)}{\Vert a_n-b_n \Vert}\rightarrow \frac{\langle v_1,x \rangle}{\Vert x\Vert},\ \frac{f(c_n)-f(d_n)}{\Vert c_n-d_n \Vert}\rightarrow \frac{\langle v_2,x \rangle}{\Vert x\Vert}\ (n\rightarrow +\infty).
$$
Now we may search for a bisequence $\lbrace (e_n,f_n) \rbrace_{n=1}^{+\infty} \subset U^2$ such that
$$
e_n,f_n\rightarrow 0,\ \frac{f(e_n)-f(f_n)}{\Vert e_n-f_n \Vert}\rightarrow \frac{\langle v,x \rangle}{\Vert x\Vert}\ (n\rightarrow +\infty).
$$
Without loss of generality we may assume that $\langle v_1,x \rangle < \langle v_2,x \rangle$. Then
$$
\exists\ N_0\in \mathbb{Z}_+: \frac{\langle v_1,x \rangle}{\Vert x \Vert} + \frac{1}{2^{N_0}} \leq \frac{\langle v,x \rangle}{\Vert x \Vert} \leq \frac{\langle v_2,x \rangle}{\Vert x \Vert} - \frac{1}{2^{N_0}}.
$$
For all $i\in\mathbb{Z}_+$ we consider a continuous mapping
$$
g_i:[0,1] \ni t \mapsto \frac{f((1-t)a_i + tc_i) - f((1-t)b_i + td_i)}{\Vert (1-t)(a_i-b_i) + t(c_i-d_i) \Vert} \in \mathbb{R}.
$$
Now set $N\in\mathbb{Z}_+$, $N>N_0$. From the second step we have
\begin{align*}
g_N(0) = \frac{f(a_N)-f(b_N)}{\Vert a_N-b_N \Vert} &\leq \frac{\langle v_1,x \rangle}{\Vert x \Vert} + \frac{1}{2^{N-1}} \leq \frac{\langle v,x \rangle}{\Vert x \Vert}\\
&\leq \frac{\langle v_2,x \rangle}{\Vert x \Vert} - \frac{1}{2^{N-1}} \leq \frac{f(c_N)-f(d_N)}{\Vert c_N-d_N \Vert} = g_N(1).    
\end{align*}
So we can find a mean point $t_N \in [0,1]$ such that 
$$
g_N(t_N) = \frac{\langle v,x \rangle}{\Vert x \Vert}.
$$
Now it is enough to take $e_{N-N_0} = (1-t_N)a_N + t_Nc_N$, $f_{N-N_0} = (1-t_N)b_N + t_Nd_N$. This bisequence fulfills our desired conditions.

Thus we have $\partial^{\circ}f(a) = \widehat{B}_a(f).$ Conversely, assume that 
$$
(x, \langle v,x \rangle) \in B_{a}(f), \forall x \in \mathbb{R}^n
$$
for some $v \in \mathbb{R}^n$. Fix $x_0 \in \mathbb{R}^n$, $x_0 \neq 0$. Without any loss of generality, let $\Vert x_0 \Vert = 1$. Then there exists a bisequence $\lbrace a_n, b_n \rbrace_{n=1}^{+\infty}$ such that
$$
a_n, b_n \rightarrow a,\ \frac{a_n - b_n}{\Vert a_n - b_n \Vert} \rightarrow x_0,\ \frac{f(a_n) - f(b_n)}{\Vert a_n - b_n \Vert} \rightarrow \langle v,x_0 \rangle\ (n \rightarrow +\infty).
$$
Let $v_n = a_n - b_n$. Then $\Vert v_n \Vert \rightarrow 0_+$ and $\frac{v_n}{\Vert v_n \Vert} \rightarrow x_0$. Moreover,
\begin{eqnarray*}
\langle v,x_0 \rangle &=& \lim_{n \rightarrow +\infty} \frac{f(a_n) - f(b_n)}{\Vert a_n - b_n \Vert}\\
&=& \lim_{n \rightarrow +\infty} \frac{f(b_n + v_n) - f(b_n)}{\Vert v_n \Vert}\\
&=& \lim_{n \rightarrow +\infty} \frac{f(b_n + \Vert v_n \Vert x_0) - f(b_n)}{\Vert v_n \Vert} + \lim_{n \rightarrow +\infty} \frac{f(b_n + v_n) - f(b_n + \Vert v_n \Vert x_0)}{\Vert v_n \Vert}.
\end{eqnarray*}
For the first component of the above sum, we have an approximation:
$$
\lim_{n \rightarrow +\infty} \frac{f(b_n + \Vert v_n \Vert x_0) - f(b_n)}{\Vert v_n \Vert} \leq \limsup_{b \rightarrow a, t \rightarrow 0_+} \frac{f(b + tx_0) - f(b)}{t} = f^{\circ}(a;x_0).
$$
For the second component, by the Lipschitz condition:
\begin{eqnarray*}
\lim_{n \rightarrow +\infty} \frac{\vert f(b_n + v_n) - f(b_n + \Vert v_n \Vert x_0) \vert}{\Vert v_n \Vert}
&\leq& \lim_{n \rightarrow +\infty} \frac{L \Vert (b_n + v_n) - (b_n + \Vert v_n \Vert x_0) \Vert}{\Vert v_n \Vert}\\
&=& \lim_{n \rightarrow +\infty} \frac{L \Vert v_n - \Vert v_n \Vert x_0 \Vert}{\Vert v_n \Vert}\\
&=& \lim_{n \rightarrow +\infty} L \left\Vert \frac{v_n}{\Vert v_n \Vert} - x_0 \right\Vert\ =\ 0,
\end{eqnarray*}
where $L$ denotes the Lipschitz constant of $f$. Finally, $\langle v,x_0 \rangle \leq f^{\circ}(a;x_0)$, which by the arbitrariness of $x_0$ yields $v \in \partial f(a)$.
\end{proof}

As a corollary from the above theorem, we obtain a very elegant geometric interpretation of the Lebourg Theorem.

\begin{wniosek}
Let $U \subset \mathbb{R}^n$ be open and let $x,y \in U$ be such that $[x,y] \subset U$. Then for a lipschitz function $f:U \rightarrow \mathbb{R}$ there exists a point $c\in (x,y)$ such that $f(y)-f(x) \in \langle \widehat{B}_c(f), y-x \rangle.$
\end{wniosek}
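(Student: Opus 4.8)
The plan is to combine the Lebourg Theorem, stated just above, with the identification established in Theorem~\ref{inkluzja}; no genuinely new argument is needed, which is exactly why this is stated as a corollary. First I would apply the Lebourg Theorem verbatim: since $U$ is open and $[x,y]\subset U$, there exists $c\in(x,y)$ with
$$
f(y)-f(x)\in\langle\partial f(c),y-x\rangle,
$$
where $\partial f(c)$ denotes Clarke's generalized gradient, i.e.\ the set of linear functionals $\xi$ dominated by $v\mapsto f^{\circ}(c;v)$.

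Next I would pass from $\partial f(c)$ to the Clarke subdifferential $\partial^{\circ}f(c)$. As recalled in the text, for a locally Lipschitz function these coincide, $\partial f(c)=\partial^{\circ}f(c)$, so the membership above can be rewritten as $f(y)-f(x)\in\langle\partial^{\circ}f(c),y-x\rangle$. Finally, because $c\in(x,y)\subset U$ and $U$ is open, Theorem~\ref{inkluzja} applies at the point $c$ and yields $\partial^{\circ}f(c)=\widehat{B}_c(f)$. Substituting this into the previous inclusion gives
$$
f(y)-f(x)\in\langle\widehat{B}_c(f),y-x\rangle,
$$
which is precisely the assertion. One could then add a sentence on the geometric reading: at the mean point $c$ the graph $\Gamma_f$ admits, among its bisequential tangent directions, one whose slope along $y-x$ matches the secant slope $\bigl(f(y)-f(x)\bigr)/\|y-x\|$, so Lebourg's theorem becomes a statement about tangent lines to $\Gamma_f$ in the spirit of the classical Lagrange Theorem.

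There is essentially no obstacle here: the entire analytic content sits in Theorem~\ref{inkluzja}. The only point requiring a little care is bookkeeping among the several subdifferential notions introduced in this section --- one must be sure that the object $\partial f(c)$ appearing in Lebourg's formulation is the one for which $\partial f=\partial^{\circ}f$, and that the hypotheses of Theorem~\ref{inkluzja} (an open domain, a point inside it) are met at $c$, which they are since $(x,y)\subset U$. Once this is checked the proof is a two-line chain of equalities.
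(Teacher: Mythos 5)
Your proposal is correct and follows exactly the route the paper intends: the corollary is obtained by combining the Lebourg Theorem with the identification $\partial f(c)=\partial^{\circ}f(c)=\widehat{B}_c(f)$ from Theorem~\ref{inkluzja}. The paper leaves this substitution implicit, and your bookkeeping of the different subdifferential notions fills in precisely what is needed.
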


\end{document}